\documentclass[dvipsnames,onefignum,onetabnum,final]{siamart250211}

\title{Computing accurate singular values using
  a mixed-precision one-sided Jacobi algorithm}

\author{Zhengbo~Zhou\footnote{C\MakeLowercase{orresponding author
(\email{zhengbo.zhou@postgrad.manchester.ac.uk}).}} \thanks{Department of Mathematics,
    University of Manchester,
    Manchester M13 9PL,
    United~Kingdom. \\ \textbf{Funding:} The first author was supported by  the University of Manchester Research
Scholar Award and the second author was supported by Engineering and Physical Sciences Research Council grant EP/W018101/1.} \and Fran\c{c}oise~Tisseur\footnotemark[2]
  \and Marcus~Webb\footnotemark[2]}

\headers{A mixed-precision one-sided Jacobi algorithm}{
  Zhengbo~Zhou, Fran\c{c}oise~Tisseur and Marcus~Webb}

\usepackage{amsfonts}
\usepackage{amsmath}
\usepackage{amssymb} 
\usepackage{bookmark}
\usepackage{booktabs}
\usepackage{enumitem}
\usepackage{graphics,graphicx}
\usepackage{mathtools}
\usepackage[utf8]{inputenc}
\usepackage{upref}
\usepackage{subdepth}
\usepackage[normalem]{ulem}
\usepackage{comment}
\usepackage[caption=false]{subfig}

\usepackage{tcolorbox}
\tcbuselibrary{listings,breakable}
\colorlet{LightCornflowerBlue}{CornflowerBlue!50}
\newtcolorbox{mybox}{
  colback=LightCornflowerBlue,
  colframe=black,
  boxrule=0pt,
  breakable,
  arc=0pt,
  outer arc=0pt
}

\makeatletter
\g@addto@macro\bfseries{\boldmath}
\makeatother

\usepackage{pgfplots,tikz}
\pgfplotsset{
  width=0.35\textwidth,
  height=0.28\textwidth,
  scale only axis,
  xlabel near ticks,
  ylabel near ticks,
  every axis title shift=3pt,
  grid=both,
}
\usetikzlibrary{plotmarks}
\pgfplotsset{compat=1.18}

\usepackage{algorithm}
\usepackage{algorithmicx}
\usepackage{algpseudocode}

\algrenewcommand\algorithmiccomment[1]{\hfill\textcolor{red}{\%\ #1}}
\usepackage{chngcntr}
\counterwithout{algorithm}{section}

\newsiamthm{assumption}{Assumption}
\newsiamremark{remark}{Remark}

\mathchardef\Gamma="7100
\mathchardef\Delta="7101
\mathchardef\Theta="7102
\mathchardef\Lambda="7103
\mathchardef\Xi="7104
\mathchardef\Pi="7105
\mathchardef\Sigma="7106
\mathchardef\Upsilon="7107
\mathchardef\Phi="7108
\mathchardef\Psi="7109
\mathchardef\Omega="710A

\newcommand{\R}{\mathbb{R}}

\newcommand{\wh}{\widehat}
\newcommand{\wt}{\widetilde}

\newcommand{\mn}{^{m\times n}}

\newcommand{\nn}{^{n\times n}}
\newcommand{\tp}{^{T}}

\newcommand{\inv}{^{-1}}
\newcommand{\pinv}{^{\dagger}}
\newcommand{\diag}{\operatorname{diag}}

\newcommand{\abs}[1]{\lvert{#1}\rvert}

\newcommand{\norm}[1]{\|{#1}\|}
\newcommand{\fnorm}[1]{\norm{#1}_F}
\newcommand{\tnorm}[1]{\norm{#1}_2}

\newcommand{\mnorm}[1]{\norm{#1}_{\mathrm{max}}}

\newcommand{\off}{{\operatorname{off}}}
\DeclareMathOperator{\fl}{\operatorname{f\kern.2ptl}}

\newcommand{\iter}[1]{^{(#1)}}
\newcommand{\tol}{{\mathrm{tol}}}
\newcommand{\eps}{{\varepsilon}}

\newcommand{\ul}{{u_{\ell}}}
\newcommand{\uh}{{u_{h}}}

\usepackage{listings}
\definecolor{gray}{rgb}{0.5,0.5,0.5}
\definecolor{mauve}{rgb}{0.58,0,0.82}
\definecolor{lightgrey}{rgb}{0.9,0.9,0.9}
\definecolor{darkgreen}{rgb}{0,0.6,0}
\lstdefinestyle{mystyle}{%
  language=matlab,
  morekeywords={anymatrix},
  showstringspaces=false,
  columns=flexible,
  keepspaces = true,
  basicstyle={\small\ttfamily},
  numbers=none,
  numberstyle=\tiny\color{gray},
  keywordstyle=\color{blue},
  commentstyle=\color{darkgreen},
  stringstyle=\color{mauve},
  breakatwhitespace=true,
  upquote=true,
  xleftmargin=5.0ex
}
\def\inline{\lstinline[basicstyle=\upshape\ttfamily]}
\usepackage[T1]{fontenc}
\LoadFontDefinitionFile{\encodingdefault}{cmtt}
\ExpandArgs{c}\def{\encodingdefault/cmtt/m/it}{<->ssub*cmtt/m/sl}

\def\2nsit{2--step NS iteration}

\def\At{{\wt{A}}}

\def\Dt{{\wt{D}}}

\newcommand{\Atcomp}{{\wt{A}_{\mathrm{comp}}}}
\newcommand{\Athcomp}{{\wt{A}_{\mathrm{h,comp}}}}

\def\ferrk{{\eps_{fwd}\iter{k}}}

\def\at{\wt{a}}

\def\gammah{\gamma_{h}}
\def\scond{\kappa_{2}^{D}}
\def\k{\kappa_{2}}

\def\Vt{\wt{V}}

\def\obliq{\operatorname{obliq}}

\begin{document}
\maketitle

\begin{abstract}
We present a relative forward error analysis of a mixed-precision preconditioned one-sided Jacobi algorithm, analogous to a two-sided version introduced in [N.~J.~Higham, F.~Tisseur, M.~Webb and Z.~Zhou, \textit{SIAM J. Matrix Anal. Appl.} 46 (2025), pp.~2423–2448], which uses low precision to compute the preconditioner, applies it in high precision, and computes the singular value decomposition using the one-sided Jacobi algorithm at working precision. 
Our analysis yields relative forward error bounds {involving the scaled condition number of the preconditioned matrix rather than that of the input matrix, potentially improving accuracy when the former is smaller.}
We present and analyze two approaches for constructing effective preconditioners. 
Our numerical experiments demonstrate that our algorithm achieves smaller relative forward errors than the LAPACK routines \texttt{DGESVJ} and \texttt{DGEJSV}, as well as the MATLAB function \texttt{svd}, particularly for ill-conditioned matrices. 
Timing tests show that our approach accelerates the convergence of the Jacobi iterations and that the dominant cost arises from a single high-precision matrix–matrix multiplication. With improved software or hardware support for this bottleneck, our algorithm would be faster than the LAPACK one-sided Jacobi algorithm \texttt{DGESVJ} and comparable in speed to the state-of-the-art preconditioned one-sided Jacobi algorithm \texttt{DGEJSV}, but much more accurate.
\end{abstract}

\begin{keywords}
one-sided Jacobi algorithm, 
singular value decomposition, 
mixed-precision algorithm,
rounding error analysis
\end{keywords}

\begin{MSCcodes}
15A18, 
65F08 
\end{MSCcodes}

\section{Introduction}\label{sec.Intro}
The one-sided Jacobi algorithm is an iterative method for computing the singular value decomposition (SVD) of a matrix~\cite{nash75}, \cite[p.~492]{gova13-MC4}. It is closely related to the Jacobi eigenvalue algorithm, which computes the eigendecomposition of a symmetric matrix \cite{jaco46}, \cite[p.~476]{gova13-MC4} --- the one-sided Jacobi algorithm applied to $A$ is mathematically equivalent to the Jacobi eigenvalue algorithm applied to $A\tp A$. We describe the Jacobi eigenvalue algorithm as ``two-sided''.

Similarly to its two-sided counterpart, the one-sided Jacobi algorithm 
has been shown by Demmel and Veseli\'{c}~\cite{deve92} to be more accurate than algorithms based on an initial bidiagonal reduction. For {a full column rank matrix} $A \in \R^{m\times n}$, define the condition number $\k(A) = \sigma_{\max}(A)/\sigma_{\min}(A)$, and the \textit{one-sided scaled condition number},
\begin{equation}\notag
  \scond(A) = \k(AD), \qquad
  D = \diag(\tnorm{a_i}^{-1}),
\end{equation}
where $a_i$ is the $i$th column of $A$. Demmel and Veseli\'{c} show that 
if the one-sided Jacobi algorithm terminates with the stopping criterion
\begin{equation}\label{eq.one-sided-stopping-critertion}
  \text{stop when} \quad 
  \abs{a_i \tp a_j}
  \le \tol \cdot \tnorm{a_i}\tnorm{a_j}
  \quad \text{for all $i \neq j$,}
\end{equation}
where $\tol$ is a user-specified tolerance {chosen to be of order $u$}, then the $k$th largest computed singular value $\wh{\sigma}_k$ satisfies
\begin{equation}\label{eqn:demmelveselicbound}
  \frac{\abs{\wh{\sigma}_{k}(A) - \sigma_{k}(A)}}{\sigma_{k}(A)} 
  \le p(m,n)\, u\, \scond(A),
\end{equation}
where $\sigma_{k}(A)$ is the $k$th largest exact singular value of $A$, 
$p(m,n)$ is a low-degree polynomial, and 
$u$ is the unit roundoff of the working precision. For algorithms based on an initial bidiagonal reduction, a similar bound can be derived, but with $\scond(A)$ replaced by the standard condition number $\k(A)$ \cite[p.~249]{demm97-ANLA}. Thus, the one-sided Jacobi algorithm is likely to be more accurate when $\scond(A) \ll \k(A)$.

The present work follows a recent paper on a mixed-precision preconditioned two-sided Jacobi algorithm for symmetric positive definite $A$ \cite{htwz25} (which follows earlier work in \cite{zhou22-MPJA}), where, in the positive definite case, 
relative forward error bounds for the computed eigenvalues are derived in terms of $\kappa_2^S(\At)$, a two-sided  form of the scaled condition number,
where $\At$ is the matrix obtained after a preconditioning step (which guarantees $\kappa_2^S(\At) \ll \kappa_2^S(A)$).

In this paper we adapt the algorithms and analyses in~\cite{htwz25} to the one-sided Jacobi algorithm applied to a full-rank matrix $A \in \R^{m\times n}$ with $m \geq n$. 
Let us emphasize immediately that this adaptation is nontrivial. Our first contribution is to derive mixed-precision preconditioned one-sided Jacobi algorithms, including a QR factorization step for dimension reduction, and to provide a full analysis of the relative forward error of the computed singular values. Our second contribution is to describe two approaches that use low-precision arithmetic to compute a nearly orthogonal preconditioner matrix $\Vt$ such that $\At = A\Vt$ has almost-orthogonal columns and a modest scaled condition number. The main bound obtained for the relative forward error is analogous to \eqref{eqn:demmelveselicbound}, but with $\scond(A)$ replaced by $\scond(\At)$, which we can guarantee to be modest even if $\scond(A)$ is not. 
The analysis in \cite{htwz25}, which bounds the two-sided condition number of $\At$ using the $\off$ quantity, does not directly translate to the one-sided case, so a new analysis involving a novel quantity we call the obliquity is developed to bound $\scond(\At)$ (see section \ref{sec.prec-and-svd}).
{In~\cite{ztw26b}, we analyze the accuracy of the computed eigenvectors and singular vectors of the mixed-precision Jacobi algorithms introduced in~\cite{htwz25} and the present paper.}

Recently, Gao et al.~\cite{gms25} and Zhang and Bai~\cite{zhba22a} introduced a 2-precision mixed-precision one-sided Jacobi algorithm. The present work, however, is on a 3-precision variant that obtains much more accurate singular values. Gao et al.~discuss the use of a QR factorization step \emph{before} the preconditioning step, whereas in the present work we must perform the QR factorization afterwards in order to achieve high accuracy (see section \ref{sec.qr}).

We performed several numerical experiments to {assess the relative forward accuracy of the algorithms} across a wide range of random and non-random matrices of varying sizes and condition numbers.
We have made our MATLAB code available at \url{https://github.com/zhengbo0503/Code_twz26a}. Timing tests showed that our approach accelerates the convergence of the Jacobi algorithm and that the dominant cost arises from a single high-precision matrix–matrix multiplication. With improved software or hardware support for this bottleneck, our algorithm would be faster than the LAPACK one-sided Jacobi algorithm \inline{DGESVJ} and comparable in speed to the state-of-the-art preconditioned one-sided Jacobi algorithm \inline{xGEJSV}, but much more accurate.

In section~\ref{sec.mp-one-sided-Jacobi}, we present Algorithm~\ref{alg.mp-precond-onesided-Jacobi}, the mixed-precision one-sided Jacobi algorithm, and state and prove the main result of this paper, Theorem~\ref{thm.main}. In section~\ref{sec.constr-of-prec}, we describe two approaches for constructing a suitable preconditioner for our algorithm, both adapted from~\cite{htwz25}. In section~\ref{sec.qr}, we discuss the use of an initial QR factorization in our algorithm, including whether the QR factorization should be performed before or after preconditioning. Numerical experiment results are presented in section~\ref{sec.numer-experi} to {assess the accuracy and performance of the algorithms in practice}. Conclusions are drawn in section~\ref{sec.conclusion}.

\section{Mixed-precision preconditioned one-sided Jacobi algorithm}
\label{sec.mp-one-sided-Jacobi}

In the remainder of the paper, $A \in \R^{m\times n}$ is a full-rank matrix and $m \geq n$. 
For any index $i$, we write $p_{i} = p_{i}(m,n)$ to mean a low-degree polynomial in $n$ and $m$. We use $u$ and $u_{h}$ to denote the unit roundoff of the working and the high precision, respectively.
The phrase ``at precision $u$'' means that an operation has been performed in floating point arithmetic with precision that has unit roundoff $u$.
{For the analysis that follows, we assume that all the precisions are IEEE binary formats to ensure promotion of data from lower to higher precision is exact~\cite[sect.~5.4.2]{ieee19}, and that neither overflow nor underflow occurs.}

Throughout sections \ref{sec.mp-one-sided-Jacobi} and \ref{sec.qr} we assume that a preconditioner $\wt{V} \in \R^{n\times n}$ can be computed such that it is numerically orthogonal
at working precision $u$, in that
\begin{equation}\label{prec-assumpt-orthogonality}
  \tnorm{\wt{V}\tp \wt{V} - I} \le p_{1} u < 1.
\end{equation}
We will discuss how to obtain such a preconditioner, using a low precision $u_\ell$, in more detail
in section~\ref{sec.constr-of-prec}.

Before entering into the analysis,
let us first describe
the \textit{mixed-precision preconditioned one-sided Jacobi algorithm}
in Algorithm~\ref{alg.mp-precond-onesided-Jacobi}. It is related to
the mixed-precision preconditioned
(two-sided) Jacobi algorithm introduced in~\cite[Alg.~1]{htwz25}.

\begin{algorithm}[tbhp!]
\caption{Mixed-precision preconditioned one-sided Jacobi algorithm}
\label{alg.mp-precond-onesided-Jacobi}
\begin{algorithmic}[1]
\Require{A full-rank matrix $A \in \R\mn$ with $m \geq n$,
  two precisions $u$ and $\uh$ with $0 < \uh < u$, and 
  a preconditioner $\Vt$ satisfying
  $\tnorm{\Vt\tp\Vt - I} \le p_1u < 1$.}
\Ensure{A computed SVD $U\Sigma V\tp$ of $A$.}
\Statex{}
\State{\label{li.compute-Atcomp}%
  Compute the preconditioned matrix $\At$
  by computing the product $A\Vt$ at precision $\uh$,
  which gives $\Athcomp$,
  and then demoting it to precision $u$, which yields $\Atcomp$.}
\State{\label{li.compute-SVD}%
  Compute an SVD, $\Atcomp = U\Sigma V_J\tp$
  using the one-sided Jacobi algorithm with
  stopping criterion~\eqref{eq.one-sided-stopping-critertion}
  at precision $u$.}
\State{\label{li.compute-right-svecs}%
  Construct the right singular vector matrix
  $V = \Vt V_J$ at precision $u$.}
\end{algorithmic}
\end{algorithm}

\subsection{The main theorem}
\label{sec.main-theorem}
Following~\cite{high02-ASNA2}, write
\begin{equation}\label{eq.gamma-gammah}
  \gammah = \frac{nu_{h}}{1-nu_{h}}
  < \gamma = \frac{nu}{1-nu} < 1.
\end{equation}
To investigate the behavior of the relative forward error
$\ferrk$ of Algorithm~\ref{alg.mp-precond-onesided-Jacobi},
we need the following assumptions. 
\begin{assumption}
\label{ass.main}
Let $u$, $u_{h}$, $\gamma$, and $\gammah$ be as in \eqref{eq.gamma-gammah},
and let $A, \At\in\R^{m\times n}$ and $p_{1}$ be as in
Algorithm~\ref{alg.mp-precond-onesided-Jacobi}.
We assume
\begin{enumerate}[label=\textup{(A\arabic*)}]
\item\label{it.6nukA}
$6nu(1-p_{1}u)\inv\k(A)<1$,
\item\label{it.gammah<u}
$\gammah <  \frac{(1-p_1u)u}{4(1+p_1u)\k(A)}$,
\item\label{it.scond}
$4\sqrt{m}u < 1$ and $16m\sqrt{n}u\scond(\At)< 1${, and}
\item\label{it.jacobi-growth}
{for any matrix $B$ to which the one-sided Jacobi algorithm is applied in this paper, $\max_{0\le k\le M}\scond(B\iter{k}) \le c_J\scond(B)$, where $c_J$ is a modest constant.}
\end{enumerate}
\end{assumption}

\begin{remark}
    Assumption~\ref{it.6nukA} guarantees that $A$ is not so ill-conditioned as to cause the preconditioned matrix $\Atcomp$ to be rank-deficient, as shown in Lemma~\ref{lem.Atcomp-full-rank}. 
    {Assumption~\ref{it.gammah<u} ensures that the error due to the
high-precision matrix--matrix multiplication is sufficiently small
to be absorbed into the working-precision term in our error analysis;
see section~\ref{sec.size-alpha-choice-uh}.} 
    We use Assumption~\ref{it.scond} to relate $\scond(\Atcomp)$ and $\scond(\At)$. 
    {The motivation for Assumption~\ref{it.jacobi-growth} is discussed in Remark~\ref{rem.jacobi-growth}.}
    Note that these assumptions are necessary for our theoretical analysis, but do not appear to be necessary in practice. Indeed, we see in Figure~\ref{subfig.nessie/whiskycorr} that even for a numerically rank-deficient matrix, Algorithm~\ref{alg.mp-precond-onesided-Jacobi} can still achieve high relative forward accuracy. 
\end{remark}

\begin{remark}\label{rem.jacobi-growth}
The bound in~\eqref{eqn:demmelveselicbound} is stated in a simplified form. More precisely, the rounding error analysis in~\cite[Cor.~4.2]{deve92} gives a bound proportional to
\begin{equation}\notag
    u\max_{0\le k\le M}\scond(A\iter{k}),
\end{equation}
where $A\iter{k}$ is the matrix after $k$ Jacobi rotations, $M$ is the number of Jacobi rotations before the stopping criterion~\eqref{eq.one-sided-stopping-critertion} is satisfied, and $A\iter{0}=A$.
Extensive numerical evidence shows that $\max_{0\le k\le M}\scond(A\iter{k})$ is generally not much larger than $\scond(A)$ in practice \cite[sect.~7.4]{deve92} \cite[chap.~5]{slap92}, although a general theoretical explanation is currently not available; see also~\cite{drma20a}.
On the other hand, this maximum can be avoided by applying the one-sided Jacobi algorithm on the left of the input matrix, equivalently on the right of its transpose, in which case the bound depends only on the initial scaled condition number $\scond(A)$~\cite[sect.~3]{math95}~\cite[sect.~4.3]{drma20a}.
\end{remark}

Our analysis focuses on
the relative forward error of the $k$th largest singular value, $\ferrk$,
which is defined as%
\begin{equation}\label{eq.ferrk}
  \ferrk = \frac{\abs{\wh{\sigma}_k(\Atcomp)-\sigma_k(A)}}{\sigma_k(A)}.
\end{equation}
Here, $\Atcomp$ is the computed product $\At=A \Vt$, as discussed in Algorithm \ref{alg.mp-precond-onesided-Jacobi}.

The following result is the main theorem of this paper,
which is related to the main theorem of~\cite{htwz25} on the two-sided variant. The proof is similar at a high level, but the technical details are different.

\begin{theorem}
\label{thm.main}
Let $A \in \R\mn$ $(m\ge n)$ be of full rank,
and let $\At = A\Vt$ be the preconditioned matrix
as in Algorithm~\ref{alg.mp-precond-onesided-Jacobi}.
If Assumption~\ref{ass.main} holds,
then the forward error $\ferrk$ defined in~\eqref{eq.ferrk} satisfies 
\begin{equation}\label{eq.main-result}
  \ferrk \le p_{1} u + p_{2} \scond(\At) u,
\end{equation}
provided that both terms on the right-hand side are less than $1$.
\end{theorem}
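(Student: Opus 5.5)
We would split the forward error by the triangle inequality into three contributions: the preconditioning perturbation, the rounding in forming $\Atcomp$, and the Jacobi iteration itself. Concretely, we write
\[
\ferrk \le \frac{\abs{\sigma_k(\At)-\sigma_k(A)}}{\sigma_k(A)} + \frac{\abs{\sigma_k(\Atcomp)-\sigma_k(\At)}}{\sigma_k(A)} + \frac{\abs{\wh{\sigma}_k(\Atcomp)-\sigma_k(\Atcomp)}}{\sigma_k(A)},
\]
and bound each term in turn, using throughout the fact that all ratios $\sigma_k(\cdot)/\sigma_k(A)$ are $1+O(u)$.

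\textbf{First term.} We write $\Vt = QH$ in polar form. Assumption~\eqref{prec-assumpt-orthogonality} gives $\tnorm{H^2-I}\le p_1u$, so every singular value of $\Vt$ lies in $[\sqrt{1-p_1u},\sqrt{1+p_1u}]$. The multiplicative singular value inequalities $\sigma_k(A)\sigma_{\min}(\Vt)\le\sigma_k(A\Vt)\le\sigma_k(A)\sigma_{\max}(\Vt)$ then bound the first term by $p_1u$, after absorbing constants.

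\textbf{Second term.} Standard matrix-multiplication error bounds at precision $\uh$, followed by demotion to precision $u$, give
\[
\Atcomp=\At+E,\qquad \abs{E}\le \gammah\abs{A}\abs{\Vt}+u\abs{\At}\ (1+\gammah),
\]
i.e.\ columnwise $\tnorm{e_j}\le u\tnorm{\at_j}+\gammah\sqrt{n}\,\fnorm{A}\dots$.

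This term is the main obstacle. The $u\abs{\At}$ part is a columnwise relative perturbation: writing it as $\At D\cdot(D\inv \text{stuff})$ and applying the Demmel--Veseli\'c relative perturbation result (a multiplicative perturbation $\At(I+F)$ with $\tnorm{F}\le \sqrt{n}u\,\scond(\At)$) gives a contribution $p\,u\,\scond(\At)$. The $\gammah$ part is only normwise, so Weyl's theorem is all we can use:
\[
\abs{\delta\sigma_k}\le \gammah p\tnorm{A}\tnorm{\Vt}.
\]
Dividing by $\sigma_k(A)\ge\sigma_{\min}(A)$ produces a factor $\gammah\k(A)$, and Assumption~\ref{it.gammah<u} is exactly what bounds this by $u/4$ times modest factors. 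Assumption~\ref{it.6nukA} is used here to guarantee that $\Atcomp$ has full rank (Lemma~\ref{lem.Atcomp-full-rank}), so that relative statements make sense.

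\textbf{Third term.} We apply the Demmel--Veseli\'c bound~\eqref{eqn:demmelveselicbound} in its precise form (Remark~\ref{rem.jacobi-growth}) to $\Atcomp$. Combined with Assumption~\ref{it.jacobi-growth}, this gives a bound $p\,u\,c_J\,\scond(\Atcomp)$ relative to $\sigma_k(\Atcomp)$. To replace $\scond(\Atcomp)$ by $\scond(\At)$, we note that $\Atcomp D$ and $\At D$ differ by a columnwise-small matrix: each column norm changes by a relative amount $O(u)$, and the column scalings $\Dt$ and $D$ agree to $1+O(u)$. By Assumption~\ref{it.scond} and Weyl's theorem, $\sigma_{\min}(\At D)$ moves by at most $4\sqrt{m}u\,\scond(\At)\sigma_{\max}$, so $\scond(\Atcomp)\le 2\scond(\At)$. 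Finally, $\sigma_k(\Atcomp)/\sigma_k(A)\le 2$ by the first two terms, since both are less than $1$ as stipulated in the theorem.

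Summing the three bounds and collecting the polynomials into $p_1u+p_2\scond(\At)u$ yields~\eqref{eq.main-result}.
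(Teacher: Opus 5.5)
Your skeleton is the paper's: the same three-term split, the first term by the multiplicative Weyl inequality (your $1-\sqrt{1-p_1u}\le p_1u$ is the relative Weyl bound the paper cites), and the Jacobi term by Demmel--Veseli\'c Cor.~4.2 with Assumption~\ref{it.jacobi-growth}, full rank of $\Atcomp$ from Assumption~\ref{it.6nukA}, and a Weyl argument on the column-scaled matrices to replace $\scond(\Atcomp)$ by a multiple of $\scond(\At)$.

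The genuine difference is the middle term. You split $E=\Atcomp-\At$ entrywise into $E_u$ with $\abs{E_u}\le u\abs{\At}$ and $E_h$ with $\abs{E_h}\le\gammah(1+u)\abs{A}\abs{\Vt}$. (The factor $(1+u)$ belongs on the $\gammah$ term, not on $u\abs{\At}$.) You handle $E_u$ by a relative perturbation bound. You handle $E_h$ by plain Weyl divided by $\sigma_n(A)$, so that Assumption~\ref{it.gammah<u} makes it an $O(nu)$ term, absorbed via $\scond(\At)\ge1$. This is valid, avoids $\alpha$ in that term, and even gives $\sqrt{n}$ instead of $\sqrt{mn}$ on the $u$-part. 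The multiplicative factor must be on the left, though: $\At+E_u=(I+E_u\At\pinv)\At$, with $\tnorm{E_u\At\pinv}\le\tnorm{E_u\Dt}/\sigma_n(\At\Dt)\le\sqrt{n}\,u\,\scond(\At)$. A right factor $\At(I+F)$ need not exist when $m>n$, since the columns of $E_u$ need not lie in the range of $\At$.

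The unsupported step is in your third term. To compare $\scond(\Atcomp)$ with $\scond(\At)$ you assert that each column of $\Atcomp$ differs from the corresponding column of $\At$ by a relative amount $O(u)$. Yet a paragraph earlier you called the $\gammah$ part ``only normwise'', and nothing in your argument controls $\tnorm{(E_h)_j}$ relative to $\tnorm{\at_j}$.

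What is missing is a lower bound on the individual column norms: $\tnorm{\at_j}=\tnorm{\At e_j}\ge\sigma_n(\At)\ge(1-p_1u)\sigma_n(A)$. Combined with $\tnorm{(E_h)_j}\le\gammah(1+u)\sqrt{n}\,(1+p_1u)\tnorm{A}$ and Assumption~\ref{it.gammah<u}, this gives $\tnorm{(E_h)_j}\le\frac12\sqrt{n}\,u\,\tnorm{\at_j}$. That is precisely the paper's estimate $\alpha\le\frac{1+p_1u}{1-p_1u}\k(A)$, i.e.\ $4\gammah\alpha<u$. The paper uses it both for $E_2$ and in Lemma~\ref{lem.rel-betw-At-Atcomp}, where it yields $\abs{\Delta\at_{ij}}\le 2u\tnorm{\at_j}$.

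So your Weyl detour spares the $\alpha$ argument only in the second term; it is still needed for the scaled-condition-number comparison. Once it is available, all of $E$ is columnwise relatively small and can be fed into Theorem~\ref{thm.deve92-perturb-svals} at once, which is what the paper does. With that line added, Assumption~\ref{it.scond} still gives $\tnorm{F}\,\scond(\At)\le 1/2$, and your argument closes.

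Two harmless constant slips remain:
\begin{itemize}
\item Knowing only that each of the first two terms is less than $1$ gives $\sigma_k(\Atcomp)/\sigma_k(A)<3$, not $\le 2$. The paper instead uses Assumption~\ref{it.6nukA} to get $1+p_1u+6nu\k(A)<3$.
\item Your ``$4\sqrt{m}u\scond(\At)\sigma_{\max}$'' should be the relative-movement bound $\tnorm{F}/\sigma_n(\At\Dt)\le\tnorm{F}\,\scond(\At)$, which uses $\sigma_1(\At\Dt)\ge1$.
\end{itemize}
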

The first and second terms of the bound in~\eqref{eq.main-result} describe the error arising from
the orthogonality of the preconditioner $\Vt$
and the Algorithm~\ref{alg.mp-precond-onesided-Jacobi} itself,
respectively.

We decompose $\ferrk$ as follows,
\begin{align}
  \ferrk 
  & \notag \le 
    \underbrace{\frac{\abs{\wh{\sigma}_k(\Atcomp)-\sigma_k(\Atcomp)}}{\sigma_k(A)}}_{E_1}
    + \underbrace{\frac{\abs{\sigma_k(\Atcomp)-\sigma_k(\At)}}{\sigma_k(A)}}_{E_2}
    + \underbrace{\frac{\abs{\sigma_k(\At)-\sigma_k(A)}}{\sigma_k(A)}}_{E_3}. \label{eq.decompose-ferrk}
\end{align}
A bound for $E_3$ is provided by the ``relative'' Weyl theorem 
for multiplicative perturbations~\cite[Cor.~5.2]{demm97-ANLA},
which yields 
\begin{equation}\label{eq.Error-E3}
  E_3 \le \tnorm{\Vt\tp\Vt-I} \le p_1 u. 
\end{equation}

We begin the analysis of $E_1$ and $E_2$ by
bounding $\abs{\Delta \At}$,
the componentwise absolute values of the perturbation
$\Delta \At = \Atcomp - \At$.

\begin{lemma}
\label{lem.size-of-DeltaAt}
Let $\At, \Atcomp \in \R\mn$ be as in
Algorithm~\ref{alg.mp-precond-onesided-Jacobi}.
Then
\begin{equation}\label{eq.error-Atcomp-At-lemma}
  \Atcomp = \At + \Delta \At, \qquad
  \abs{\Delta\At} \le u \abs{\At} + 2\gammah \abs{A}\abs{\Vt}.
\end{equation}
\end{lemma}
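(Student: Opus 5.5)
The plan is to split the error into the two rounding stages of line~\ref{li.compute-Atcomp} of Algorithm~\ref{alg.mp-precond-onesided-Jacobi}: the matrix product at precision $\uh$, followed by demotion to precision $u$.

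\emph{Step 1: the high-precision product.} Each entry of $A\Vt$ is an inner product of length $n$. By the standard componentwise error bound for matrix multiplication \cite{high02-ASNA2},
\[
  \Athcomp = \At + E_h, \qquad \abs{E_h} \le \gammah \abs{A}\abs{\Vt},
\]
where $\gammah$ is as in \eqref{eq.gamma-gammah} (inner products of length $n$).

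\emph{Step 2: demotion to precision $u$.} Rounding each entry to the working precision gives
\[
  \Atcomp = \Athcomp + E_\ell, \qquad \abs{E_\ell} \le u\abs{\Athcomp}.
\]
This uses the standing assumption of no overflow or underflow.

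\emph{Step 3: combining.} Set $\Delta\At = E_h + E_\ell$. Then
\[
  \abs{\Delta\At} \le \gammah\abs{A}\abs{\Vt} + u\bigl(\abs{\At} + \abs{E_h}\bigr) \le u\abs{\At} + (1+u)\gammah\abs{A}\abs{\Vt}.
\]
Since $u<1$, we have $(1+u)\gammah \le 2\gammah$, which yields \eqref{eq.error-Atcomp-At-lemma}.

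There is no genuine obstacle here. The one point needing care is the demotion step: the rounding error must be bounded relative to $\abs{\Athcomp}$ and not $\abs{\At}$, so the cross term $u\gammah\abs{A}\abs{\Vt}$ has to be absorbed into the factor $2$. The bound is deliberately loose, since $(1+u)$ would suffice in place of $2$.
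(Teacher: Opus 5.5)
Your proposal is correct and follows essentially the same route as the paper. Like the paper, you bound the high-precision product error by $\gammah\abs{A}\abs{\Vt}$, bound the demotion error by $u\abs{\Athcomp}$, and then use $\abs{\Athcomp} \le \abs{\At} + \gammah\abs{A}\abs{\Vt}$ to absorb the cross term $u\gammah\abs{A}\abs{\Vt}$ into the factor $2$. The one detail the paper states that you leave implicit is that promoting $A$ and $\Vt$ to precision $\uh$ is exact under the IEEE binary-format assumption; this is what lets your Step 1 compare against the exact product $A\Vt$.
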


\begin{proof}
In line~\ref{li.compute-Atcomp} of
Algorithm~\ref{alg.mp-precond-onesided-Jacobi}, 
$A$ and $\Vt$ are promoted from working precision to high precision,
which is error free~\cite[p.~43]{over25-ncifpa2}.
Then the matrix $\Athcomp$ 
(the computed $\At$ at high precision) satisfies~\cite[Eq.~3.13]{high02-ASNA2},
\begin{equation}\label{eq.matrix-product-high-prec}
  \abs{\Athcomp - \At} \le \gammah \abs{A} \abs{\Vt}.
\end{equation}
Finally we treat the demotion of $\Athcomp$ to working precision $u$  as rounding $\Athcomp$ at working precision, which yields~\cite[Thm.~2.2]{high02-ASNA2}
\begin{equation}\label{eq.round-at-working-prec}
  \abs{\Athcomp - \Atcomp} \le u \abs{\Athcomp}.
\end{equation}
Combining inequalities~\eqref{eq.matrix-product-high-prec}
and~\eqref{eq.round-at-working-prec} yields
\begin{equation}\label{eq.difference-Atcomp-At}
  \abs{\Atcomp - \At}
  \le \abs{\Atcomp-\Athcomp} + \abs{\Athcomp - \At}
  \le  \gamma_h \abs{A} \abs{\Vt} + u\abs{\Athcomp}. 
\end{equation}
Moreover, it follows from~\eqref{eq.matrix-product-high-prec} that $\abs{\Athcomp}=\abs{\Athcomp-\At+\At} \le \gammah\abs{A}\abs{\Vt} + \abs{\At}$.
Substituting this into \eqref{eq.difference-Atcomp-At} yields
\begin{equation}\notag
  \abs{\Atcomp-\At}
  \le \gammah \abs{A} \abs{\Vt} + u (|\At| + \gammah |A||\Vt|)
  < u \abs{\At} + 2\gammah \abs{A}\abs{\Vt},
\end{equation}
where the last inequality follows from $u\gammah < \gammah$. 
\end{proof}


\subsection{Analysis of \texorpdfstring{$E_1$}{E1}}
\label{sec.analysis-E1}
We now bound $E_1$ in~\eqref{eq.decompose-ferrk}.
Let us first rewrite $E_{1}$ as
\begin{equation}\notag
  E_{1} =
  \frac{\abs{\wh{\sigma}_k(\Atcomp)-\sigma_k(\Atcomp)}}{\sigma_k(A)}
  \le
  \frac{|\wh{\sigma}_k(\Atcomp)-\sigma_k(\Atcomp)|}{\sigma_{k}(\Atcomp)}
\cdot
  \frac{\sigma_{k}(\Atcomp)}{\sigma_{k}(A)}
\end{equation}

The following lemmas are important for the analysis that follows.

\begin{lemma}
    \label{lem.conseq-of-VV-I}
    If $\tnorm{\wt{V}\tp \wt{V} - I} \le p_{1} u < 1$, then $\max_j \tnorm{\wt{v}_j} \leq \|\Vt\|_2 \leq 1 + p_1u$, where $\wt{v}_j$ is the $j$th column of $\Vt$.
\end{lemma}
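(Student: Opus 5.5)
The plan is to get both inequalities from the spectrum of $\Vt\tp\Vt$. Write $E = \Vt\tp\Vt - I$. Since $E$ is symmetric, $\tnorm{E}$ is the largest absolute value of its eigenvalues. Every eigenvalue $\lambda$ of $\Vt\tp\Vt$ therefore lies in $[1-p_1u,\,1+p_1u]$.

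\textbf{Bound on $\|\Vt\|_2$.} Because $\tnorm{\Vt}^2 = \lambda_{\max}(\Vt\tp\Vt)$, we get
\begin{equation}\notag
  \tnorm{\Vt}^2 \le 1 + \tnorm{E} \le 1 + p_1u .
\end{equation}
The standard inequality $\sqrt{1+x} \le 1 + x/2 \le 1+x$ for $x \ge 0$ then gives $\tnorm{\Vt} \le \sqrt{1+p_1u} \le 1+p_1u$.

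\textbf{Bound on the columns.} For each $j$, take $e_j$ to be the $j$th unit vector, so that $\wt{v}_j = \Vt e_j$. Then
\begin{equation}\notag
  \tnorm{\wt{v}_j} = \tnorm{\Vt e_j} \le \tnorm{\Vt}\,\tnorm{e_j} = \tnorm{\Vt}.
\end{equation}
Taking the maximum over $j$ gives the first inequality. Alternatively, one can note directly that $\tnorm{\wt{v}_j}^2 = (\Vt\tp\Vt)_{jj} = 1 + e_j\tp E e_j \le 1 + p_1u$.

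There is no real obstacle here. The only point needing care is the passage from the bound on $\tnorm{\Vt}^2$ to the bound on $\tnorm{\Vt}$, which uses $p_1u \ge 0$. The hypothesis $p_1u<1$ is not needed for this lemma; it only matters later, for positivity of the lower eigenvalue bound.
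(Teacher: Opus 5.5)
Your proof is correct and follows essentially the same route as the paper. The paper bounds $\tnorm{\Vt}^2 = \tnorm{\Vt\tp\Vt} \le \tnorm{I} + \tnorm{\Vt\tp\Vt - I}$ by the triangle inequality, which is equivalent to your eigenvalue argument, and then uses $(1+p_1u)^{1/2} \le 1+p_1u$; for the column bound it cites the standard fact $\max_j \tnorm{\wt{v}_j} \le \tnorm{\Vt}$, which is exactly your $\tnorm{\Vt e_j} \le \tnorm{\Vt}$.
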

\begin{proof}
By \cite[Eq.~6.12]{high02-ASNA2}, we have
$\max_j\tnorm{\wt{v}_j} \le \tnorm{\Vt}$.
{
For the second inequality, we have
\begin{equation}\notag
  \tnorm{\Vt}^{2}
  = \tnorm{\Vt\tp\Vt}
  \le \tnorm{I} + \tnorm{\Vt\tp\Vt - I}
  \le 1 + p_1 u.
\end{equation}
Therefore $\tnorm{\Vt} \le (1+p_{1}u)^{1/2} \le 1+p_{1}u$.}
\end{proof}

\begin{lemma} \label{lem.Atcomp-full-rank}
Let $\Atcomp \in \R\mn$ be as in Algorithm~\ref{alg.mp-precond-onesided-Jacobi}. If Assumption~\ref{it.6nukA} and $\gamma_h < u$ hold, then $\Atcomp$ has full rank.
\end{lemma}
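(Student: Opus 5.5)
We will show that $\sigma_{\min}(\Atcomp)>0$. We compare $\Atcomp$ with $\At = A\Vt$ through Weyl's perturbation theorem for singular values, $\sigma_n(\Atcomp) \ge \sigma_n(\At) - \tnorm{\Delta\At}$. The proof therefore reduces to two estimates: a lower bound on $\sigma_n(\At)$ and an upper bound on $\tnorm{\Delta\At}$.

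\textbf{Lower bound on $\sigma_n(\At)$.} By the hypothesis $\tnorm{\Vt\tp\Vt-I}\le p_1u<1$ we have $\sigma_n(\Vt)^2 \ge 1-p_1u$. Hence
\[
\sigma_n(\At)\ge \sigma_n(A)\,\sigma_n(\Vt)\ge \sigma_n(A)(1-p_1u)^{1/2}\ge \sigma_n(A)(1-p_1u).
\]
This is also the content of the relative Weyl bound \eqref{eq.Error-E3}.

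\textbf{Upper bound on $\tnorm{\Delta\At}$.} We start from Lemma~\ref{lem.size-of-DeltaAt} and pass from componentwise to normwise bounds. For the first term, $\tnorm{\,|\At|\,}\le\sqrt{n}\tnorm{\At}$, since $|\At|$ has rank at most $n$. For the second term, $\tnorm{\,|A||\Vt|\,}\le \tnorm{\,|A|\,}\tnorm{\,|\Vt|\,}\le \sqrt{n}\tnorm{A}\cdot\sqrt{n}\tnorm{\Vt}$. Using $\tnorm{\At}\le\tnorm{A}\tnorm{\Vt}$ and Lemma~\ref{lem.conseq-of-VV-I}, this gives
\[
\tnorm{\Delta\At}\le (\sqrt{n}u+2n\gammah)(1+p_1u)\tnorm{A}\le 3nu(1+p_1u)\tnorm{A},
\]
where the last step uses $\gammah<u$.

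\textbf{Combining.} Full rank follows if $3nu(1+p_1u)\tnorm{A}<(1-p_1u)\sigma_n(A)$, that is, if $3nu(1+p_1u)(1-p_1u)\inv\k(A)<1$. Since $p_1u<1$ gives $1+p_1u<2$, this is implied by Assumption~\ref{it.6nukA}, $6nu(1-p_1u)\inv\k(A)<1$. The constant $6$ in that assumption is exactly what this crude factor of $2$ produces.

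\textbf{Main obstacle.} The only delicate step is the bookkeeping of constants in the conversion from componentwise to normwise bounds. We must choose the norm inequalities (the $\sqrt{n}$ factors for $|\cdot|$ versus Frobenius bounds) so that the final constant is at most $6n$. If the estimate for $\tnorm{\,|A||\Vt|\,}$ above proves too loose, the fallback is to bound it columnwise, using $\max_j\tnorm{\wt v_j}\le 1+p_1u$ from Lemma~\ref{lem.conseq-of-VV-I}, which again yields a factor of order $n$ times $\tnorm{A}$.
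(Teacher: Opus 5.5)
Your proof is correct and follows the paper's argument: Weyl's theorem, the lower bound $\sigma_n(\At)\ge(1-p_1u)\sigma_n(A)$, a normwise bound on $\Delta\At$ derived from Lemma~\ref{lem.size-of-DeltaAt}, and then Assumption~\ref{it.6nukA}. The paper instead uses Frobenius norms, $\fnorm{\Delta\At}\le u\fnorm{\At}+2\gammah\fnorm{A}\fnorm{\Vt}\le n(u+2\gammah)\tnorm{A}\tnorm{\Vt}$, and the relative Weyl theorem rather than $\sigma_n(A\Vt)\ge\sigma_n(A)\sigma_n(\Vt)$; these differences are cosmetic. One small correction: $\tnorm{\abs{\At}}\le\sqrt{n}\,\tnorm{\At}$ holds because $\tnorm{\abs{\At}}\le\fnorm{\At}\le\sqrt{\operatorname{rank}(\At)}\,\tnorm{\At}$, so the factor comes from the rank of $\At$ (or simply from $\At$ having $n$ columns), not from the rank of $\abs{\At}$.
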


\begin{proof}
Equation \eqref{eq.error-Atcomp-At-lemma} implies the normwise bound,
\begin{equation}\notag
  \fnorm{\Delta\At}
  \le u \fnorm{\At} + 2\gammah \fnorm{A} \fnorm{\Vt}
  \le n(u + 2\gammah) \tnorm{A} \tnorm{\Vt}.
\end{equation}
Using Lemma~\ref{lem.conseq-of-VV-I} gives 
\begin{equation}\label{eq.normwise-bound-on-DeltaA}
  \fnorm{\Delta \At}
  \le n(u + 2\gammah) (1 + p_1 u) \tnorm{A}
  \le 2nu \tnorm{A} + 4n\gammah \tnorm{A} 
  \le 6nu \tnorm{A},
\end{equation}
where the last inequality is due to $\gammah < u$.
Let us now examine the smallest singular value of $\Atcomp$.
By Weyl's theorem~\cite[Cor.~5.1]{demm97-ANLA}, we have 
\begin{equation}\label{eq.boundAtcomp}
  \sigma_{n}(\Atcomp) = \sigma_{n}(\At + \Delta \At) \ge \sigma_{n}(\At) - \tnorm{\Delta\At}
  \ge \sigma_{n}(\At) - \fnorm{\Delta\At}.
\end{equation}
By the relative Weyl theorem and \eqref{prec-assumpt-orthogonality},
\begin{equation}
    \label{eq.Weyl-on-At-A}
    -p_1u\le-\tnorm{\Vt\tp\Vt-I} \le \frac{\sigma_n(\At)}{\sigma_n(A)}-1\le\tnorm{\Vt\tp\Vt-I} \le p_1u,
\end{equation}
which gives $\sigma_n(\At)\ge (1-p_1u)\sigma_n(A)$.
By substituting~\eqref{eq.normwise-bound-on-DeltaA} into \eqref{eq.boundAtcomp}, we have 
\begin{equation}\label{eq.1111}
  \sigma_{n}(\Atcomp) \ge \sigma_{n}(\At) - 6nu\tnorm{A} \ge
  (1-p_{1}u) \sigma_{n}(A) - 6nu\tnorm{A}.
\end{equation}
Assumption~\ref{it.6nukA} implies $6nu\tnorm{A} < \sigma_{n}(A)(1-p_{1}u)$,
which ensures $\Atcomp$ is full rank, as the right-hand side of~\eqref{eq.1111} becomes greater than zero. 
\end{proof}

Since the matrix $\Atcomp$ has full rank, we can use the rounding error analysis of the one-sided Jacobi algorithm with stopping criterion \eqref{eq.one-sided-stopping-critertion}.
By~\cite[Cor.~4.2]{deve92} {and Assumption~\ref{it.jacobi-growth}}, we have%
\begin{equation}\label{eq.deve92-sigma-bound}
  \frac{|{\wh{\sigma}_k(\Atcomp)-\sigma_k(\Atcomp)}|}{\sigma_k(\Atcomp)}
  \le p_3 \scond(\Atcomp) u,
\end{equation}
provided that $p_{3} \scond(\Atcomp) u < 1$.
Substituting~\eqref{eq.deve92-sigma-bound} into $E_1$ gives 
\begin{equation}\label{eq.E1-into-ratio} 
  E_1 \le p_3 u \scond(\Atcomp) \, \frac{\sigma_k(\Atcomp)}{\sigma_k(A)}. 
\end{equation}
Using the bound \eqref{eq.normwise-bound-on-DeltaA} on $\fnorm{\Delta\At}$
and \eqref{eq.Weyl-on-At-A},  
we are able to control the ratio as follows, 
\begin{equation}
    \notag
    \frac{\sigma_k(\Atcomp)}{\sigma_k(A)} 
    \le \frac{\sigma_k(\At)}{\sigma_k(A)} + \frac{\fnorm{\Delta\At}}{\sigma_k(A)}
    \le 1+p_1 u + \frac{6nu\tnorm{A}}{\sigma_n(A)}
    \le 1 + p_1 u  + 6nu\k(A).
\end{equation}
Substituting this into~\eqref{eq.E1-into-ratio} gives 
$ E_1 \le \big( 1 + p_1u + 6nu\kappa_2(A) \big) \, p_3 u \scond(\Atcomp)$. 
Assumption~\ref{it.6nukA} ensures that $6nu\k(A) < 1$, and since $p_1u < 1$ holds, redefining $p_3$ as $3p_3$ leads to
\begin{equation}\label{eq.Error-E1}
  E_1 \le p_3 u \scond(\Atcomp).
\end{equation}

\subsection{Bound on the forward error \texorpdfstring{$E_2$}{E2}}
\label{sec.bound-on-E2}

Using Lemma~\ref{lem.size-of-DeltaAt}, we rewrite $E_2$ as 
\begin{equation}
    \label{eq.E2-decompose}
    E_2 = \frac{|\sigma_k(\At+\Delta\At)-\sigma_k(\At)|}{\sigma_k(A)}
    \le \frac{|\sigma_k(\At+\Delta\At)-\sigma_k(\At)|}{\sigma_k(\At)}
    \cdot \frac{\sigma_k(\At)}{\sigma_k(A)}.
\end{equation}
The difficulty lies in bounding the first ratio, 
which requires the following theorem.
\begin{theorem}[{\cite[Thm.~2.14]{deve92}}]
\label{thm.deve92-perturb-svals}
Let $\At$ be a general full-rank matrix, and let $\Dt = \diag(\norm{\at_i}\inv)$
so that the columns of $\At \Dt$ have unit $2$-norm.
If $\tnorm{\Delta \At \Dt} < \sigma_n(\At \Dt)$, then %
\begin{equation}\notag
  \frac{\abs{\sigma_k(\At + \Delta \At)-\sigma_k(\At)}}{\sigma_k(\At)}
  \le \scond(\At) \cdot \tnorm{\Delta \At \Dt}.
\end{equation}
\end{theorem}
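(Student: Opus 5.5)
The plan is to convert the additive perturbation $\Delta\At$ into a multiplicative perturbation acting on the left of $\At$. Then we apply the same ``relative'' Weyl theorem for multiplicative perturbations~\cite[Cor.~5.2]{demm97-ANLA} that was used for $E_3$.

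The key identity comes from full column rank. Since $\At$ has full column rank and $\Dt$ is nonsingular diagonal, $\At\Dt$ also has full column rank, so $(\At\Dt)\pinv(\At\Dt) = I_n$. Hence
\begin{equation}\notag
  \Delta\At = \Delta\At\Dt\,\Dt\inv
  = \Delta\At\Dt\,(\At\Dt)\pinv(\At\Dt)\Dt\inv
  = \Delta\At\Dt\,(\At\Dt)\pinv\,\At .
\end{equation}
Setting $E = \Delta\At\Dt(\At\Dt)\pinv \in \R^{m\times m}$, this gives $\At + \Delta\At = (I_m + E)\At$. Moreover
\begin{equation}\notag
  \tnorm{E} \le \frac{\tnorm{\Delta\At\Dt}}{\sigma_n(\At\Dt)},
\end{equation}
which is less than $1$ by hypothesis. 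Therefore $I_m + E$ is nonsingular.

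Next we compare singular values. By the Courant--Fischer characterization, for nonsingular square $X$ we have $\sigma_{\min}(X)\sigma_k(\At) \le \sigma_k(X\At) \le \tnorm{X}\sigma_k(\At)$. Applying this with $X = I_m + E$, and using $1-\tnorm{E} \le \sigma_{\min}(I_m+E)$ and $\tnorm{I_m+E} \le 1+\tnorm{E}$, yields
\begin{equation}\notag
  \frac{\abs{\sigma_k(\At+\Delta\At)-\sigma_k(\At)}}{\sigma_k(\At)} \le \tnorm{E}.
\end{equation}

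It remains to bound $1/\sigma_n(\At\Dt)$ by $\scond(\At)$. The columns of $\At\Dt$ have unit $2$-norm, so $\sigma_{\max}(\At\Dt) = \tnorm{\At\Dt} \ge \max_i \tnorm{\at_i\tnorm{\at_i}\inv} = 1$, using~\cite[Eq.~6.12]{high02-ASNA2} as in Lemma~\ref{lem.conseq-of-VV-I}. Hence
\begin{equation}\notag
  \frac{1}{\sigma_n(\At\Dt)} \le \frac{\sigma_{\max}(\At\Dt)}{\sigma_n(\At\Dt)} = \k(\At\Dt) = \scond(\At),
\end{equation}
and combining the two displays gives the claim.

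The main obstacle is spotting the factorization through the pseudoinverse, which moves $\Dt$ onto the perturbation while keeping $\At$ itself on the right. Once that identity is in hand, the rest is the standard multiplicative Weyl argument plus the elementary observation $\tnorm{\At\Dt}\ge 1$.
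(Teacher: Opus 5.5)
Your proof is correct. The paper does not prove this statement; it quotes it from \cite[Thm.~2.14]{deve92}, so there is no in-paper proof to compare against. Your route does, however, coincide with the bound $\tnorm{\Delta A D(AD)\pinv}$ from \cite[Cor.~3.7]{ipse98} that the paper invokes in the proof of Proposition~\ref{lem.Jacobi-on-QR}: your $E=\Delta\At\Dt(\At\Dt)\pinv$ is exactly that quantity. What your argument buys is a self-contained derivation showing that the cited theorem is a slightly weakened form of that bound. You actually obtain $\tnorm{\Delta\At\Dt}/\sigma_n(\At\Dt)$, which is smaller than $\scond(\At)\tnorm{\Delta\At\Dt}$ by the factor $\tnorm{\At\Dt}\in[1,\sqrt{n}]$.

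Two small points. First, your opening sentence says you will apply \cite[Cor.~5.2]{demm97-ANLA} as was done for $E_3$. Used literally with the left factor $I_m+E$, that result gives only $\tnorm{(I_m+E)\tp(I_m+E)-I_m}\le 2\tnorm{E}+\tnorm{E}^2$, a factor of about $2$ worse. What delivers exactly $\tnorm{E}$ is the direct Courant--Fischer pair $\sigma_{\min}(X)\sigma_k(\At)\le\sigma_k(X\At)\le\tnorm{X}\sigma_k(\At)$ in your second paragraph, so credit that and drop the reference to Cor.~5.2. Second, the hypothesis $\tnorm{\Delta\At\Dt}<\sigma_n(\At\Dt)$ is not needed for the inequality itself. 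Both Courant--Fischer bounds hold for any square $X$, and $\sigma_{\min}(I_m+E)\ge 1-\tnorm{E}$ always, so the conclusion holds unconditionally; the lower side is trivial when $\tnorm{E}\ge 1$. The hypothesis only guarantees that $\At+\Delta\At$ keeps full rank and that the bound is informative.
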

We see that it suffices to study the behavior of $\tnorm{\Delta\At \Dt}$.
We 
rewrite~\eqref{eq.error-Atcomp-At-lemma} as
$
  \abs{\Delta \at_{ij}} \le u \abs{\at_{ij}} +
  2\gammah \big( \abs{A} \abs{\Vt} \big)_{ij}
$
so that 
\begin{equation}\notag
  \frac{\abs{\Delta\at_{ij}}}{\tnorm{\at_j}}
  \le u \frac{\abs{\at_{ij}}}{\tnorm{\at_j}}
  + 2\gammah \frac{\big( \abs{A} \abs{\Vt} \big)_{ij}}{\tnorm{\at_j}}.
\end{equation}
Since $\abs{\at_{ij}}/\tnorm{\at_j} \le 1$, 
\begin{equation}\label{eq.definition-alpha}
  \frac{\abs{\Delta\at_{ij}}}{\tnorm{\at_j}}
  \le u + 2\gammah\alpha, \quad  \text{ where } \ 
  \alpha = \max_{i,j}
  \frac{\big( \abs{A} \abs{\Vt} \big)_{ij}}{\tnorm{\at_j}}.
\end{equation}
We can interpret~\eqref{eq.definition-alpha} as
$\abs{\Delta \At \Dt} \le (u + 2\gammah\alpha) G$,
where $G \in \R^{m\times n}$ and $g_{ij} = 1$.
By~\cite[Lem.~6.6~(b)]{high02-ASNA2}, we have
\begin{equation}\label{eq.bound-for-DeltaADA}
  \tnorm{\Delta \At \Dt} \le (u + 2\gammah\alpha)\tnorm{G} 
  = \sqrt{mn}(u + 2\gammah\alpha).
\end{equation}
Note that $\tnorm{\At \Dt} \ge \tnorm{\At \Dt e_1} = 1$. Hence,
\begin{equation}
    \notag
    \tnorm{\Delta\At \Dt} 
    \le \sqrt{mn}(u + 2\gammah \alpha) 
    \le \sqrt{mn}(u + 2\gammah \alpha) \tnorm{\At \Dt}.
\end{equation}
For Theorem~\ref{thm.deve92-perturb-svals} to apply, we need $\tnorm{\Delta\At \Dt} < \sigma_n(\At \Dt)$ which is implied by $\sqrt{mn}(u+2\gammah\alpha)\scond(\At) < 1$. Provided this inequality holds,
we can apply Theorem~\ref{thm.deve92-perturb-svals} and the bound \eqref{eq.bound-for-DeltaADA} for $\tnorm{\Delta\At \Dt}$, yielding
\begin{equation}\label{eq.eqn1}
  \frac{\abs{\sigma_k(\Atcomp)-\sigma_k(\At)}}{\sigma_k(\At)}
  \le \sqrt{mn}(u + 2\gammah \alpha)\scond(\At).
\end{equation}
Substituting \eqref{eq.eqn1} into 
the decomposition of $E_2$ \eqref{eq.E2-decompose} gives 
\begin{equation}\label{eq.E2-intermediate}
  E_2 =
  \frac{\abs{\sigma_k(\Atcomp)-\sigma_k(\At)}}{\sigma_k(A)}
  \le \sqrt{mn}(u + 2\gammah \alpha)\scond(\At)
  \cdot \frac{\sigma_k(\At)}{\sigma_k(A)}
\end{equation}
Using \eqref{eq.Weyl-on-At-A} and the assumption $p_1u<1$ gives $\sigma_k(\At)/\sigma_k(A) \le 1+p_1u < 2$,
and \eqref{eq.E2-intermediate} becomes 
\begin{equation}\label{eq.E2-Error}
  E_2 \le \big(2\sqrt{mn}u + 4\sqrt{mn}\gammah \alpha\big) \scond(\At), 
\end{equation}
provided that the right-hand side is less than 1.

\subsection{Intermediate bound}
Summarizing the results for $E_1$~\eqref{eq.Error-E1},
$E_2$~\eqref{eq.E2-Error}, and $E_3$~\eqref{eq.Error-E3},
we have the following result.

\begin{lemma}
\label{lem.intermediate-bound}
Let $\At$, $\Atcomp \in \R\mn$, $\Vt\in \R\nn$ be as in
Algorithm~\ref{alg.mp-precond-onesided-Jacobi}.
Let $\alpha$ be defined as in~\eqref{eq.definition-alpha}.
If Assumption~\ref{it.6nukA} and $\gammah  < u$ hold,
then
\begin{equation}\label{eq.intermediate-bound-e123}
  \ferrk \le p_3 u \scond(\Atcomp) +
  \big(2\sqrt{mn}u + 4\sqrt{mn}\gammah \alpha\big) \scond(\At) 
  + p_1u,
\end{equation}
provided that each term on the right-hand side is less than $1$.
\end{lemma}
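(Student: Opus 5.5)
The bound \eqref{eq.intermediate-bound-e123} follows by assembling the three pieces of the decomposition \eqref{eq.decompose-ferrk}, $\ferrk \le E_1 + E_2 + E_3$, which is just the triangle inequality applied to $\wh{\sigma}_k(\Atcomp)-\sigma_k(A)$ after inserting $\pm\sigma_k(\Atcomp)$ and $\pm\sigma_k(\At)$. So the plan is to check that each of the three bounds derived above is valid under exactly the hypotheses of the lemma (Assumption~\ref{it.6nukA}, $\gammah<u$, and the proviso that each term on the right is less than $1$), and then add them.

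For $E_3$, I will cite \eqref{eq.Error-E3}. It uses only the relative Weyl theorem and \eqref{prec-assumpt-orthogonality}, so it needs nothing beyond the standing assumption $p_1u<1$.

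For $E_1$, I first invoke Lemma~\ref{lem.Atcomp-full-rank}, whose hypotheses are precisely Assumption~\ref{it.6nukA} and $\gammah<u$, to conclude that $\Atcomp$ has full rank. That makes the Demmel--Veselić bound \eqref{eq.deve92-sigma-bound} applicable; its proviso $p_3\scond(\Atcomp)u<1$ is covered by the requirement that the first term be less than $1$. The ratio $\sigma_k(\Atcomp)/\sigma_k(A)$ is then controlled by \eqref{eq.normwise-bound-on-DeltaA} (which again uses $\gammah<u$) together with \eqref{eq.Weyl-on-At-A}. Assumption~\ref{it.6nukA} gives $6nu\k(A)<1$, and $p_1u<1$, so the factor is below $3$; absorbing it into $p_3$ yields \eqref{eq.Error-E1}.

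For $E_2$, the only delicate point is the applicability condition of Theorem~\ref{thm.deve92-perturb-svals}, namely $\tnorm{\Delta\At\Dt}<\sigma_n(\At\Dt)$. Using \eqref{eq.bound-for-DeltaADA} and $\tnorm{\At\Dt}\ge1$, this is implied by $\sqrt{mn}(u+2\gammah\alpha)\scond(\At)<1$. That inequality holds whenever the second term of \eqref{eq.intermediate-bound-e123}, $(2\sqrt{mn}u+4\sqrt{mn}\gammah\alpha)\scond(\At)$, is less than $1$, since the second term is exactly twice it. With that checked, \eqref{eq.eqn1} and $\sigma_k(\At)/\sigma_k(A)\le1+p_1u<2$ give \eqref{eq.E2-Error}.

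Summing \eqref{eq.Error-E1}, \eqref{eq.E2-Error} and \eqref{eq.Error-E3} gives \eqref{eq.intermediate-bound-e123}. The only real obstacle is this bookkeeping of side conditions: each ``provided that'' must be implied by the lemma's hypothesis that every term is less than $1$. In particular, the $E_2$ proviso has to be traced back through the factor of two, as shown above, and I expect that to be the step needing the most care.
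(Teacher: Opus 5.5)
Your proposal is correct and is essentially the paper's own argument: the lemma is obtained by summing \eqref{eq.Error-E1}, \eqref{eq.E2-Error} and \eqref{eq.Error-E3}, and your check that each side condition follows from the terms being less than $1$ is right, including the factor of two linking the $E_2$ applicability condition to the second term. One minor caveat, which the paper's statement shares: \eqref{eq.deve92-sigma-bound} also relies on Assumption~\ref{it.jacobi-growth} to replace the maximum of the scaled condition number over the Jacobi iterates by a multiple of its initial value, so the three bounds do not hold under ``exactly'' the listed hypotheses alone.
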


For the rest of this section,
we provide a choice of $\uh$ that guarantees
we can drop the dependence on $\alpha$,
and a relationship between $\scond(\Atcomp)$ and $\scond(\At)$
to make the bound concise.

\subsubsection{The size of \texorpdfstring{$\alpha$}{alpha} 
    and choice of \texorpdfstring{$\uh$}{uh}}
\label{sec.size-alpha-choice-uh}
The quantity $\alpha$ in~\eqref{eq.definition-alpha} can be bounded by
$\alpha\le \max_{i,j}\big( \abs{A} \abs{\Vt} \big)_{ij}/
  \min_{j}\tnorm{\at_j}$.
The definition of the max norm~\cite[Tab.~6.2 \& p.~110]{high02-ASNA2} gives
$  \max_{i,j}\big(\abs{A}\abs{\Vt}\big)_{ij}
  = \mnorm{\abs{A}\abs{\Vt}}
  = \norm{|A||\Vt|}_{1,\infty}
$.
Using the property of the mixed subordinate matrix norm~\cite[p.~110]{high02-ASNA2}, 
we have
    $\norm{|A||\Vt|}_{1,\infty} 
    \le \norm{|A|}_{2,\infty} \norm{|\Vt|}_{1,2}$.
It follows from \cite[Prob.~6.11]{high02-ASNA2} that
\begin{align*}
    \notag
    \norm{|A|}_{2,\infty} &= \norm{A}_{2,\infty} \le \tnorm{A},\\
    \norm{|\Vt|}_{1,2} & = \max_{j} \tnorm{(|\Vt|)_j } = 
    \max_{j} \tnorm{|\wt{v}_j|} = \max_{j} \tnorm{\wt{v}_j} \le 1+p_1u,
\end{align*}
where the last inequality is due to Lemma~\ref{lem.conseq-of-VV-I}. By the min-max principle for singular values,
\begin{equation}\label{eqn.sigman(A)}
\sigma_n(\At) = \min_{\|v\|_2=1} \|\At v\|_2 \leq \min_j \|\At e_j\|_2 = \min_j\tnorm{\at_j}.
\end{equation} In conclusion, we have
\begin{equation}\notag
  \alpha 
  \le \frac{(1+p_1u)\tnorm{A}}{\sigma_n(\At)}
  \le \frac{1 + p_1u}{1-p_1u}\k(A). 
\end{equation}

The next corollary incorporates this bound on $\alpha$
with our intermediate bound (Lemma~\ref{lem.intermediate-bound})
to provide conditions on $\uh$ to allow us to drop the dependence on $\alpha$.

\begin{corollary}
\label{cor.intermediate-corollary}
Let $\At,\Atcomp \in \R\mn$, and $\Vt \in \R\nn$ be as in
Algorithm~\ref{alg.mp-precond-onesided-Jacobi}.
If Assumptions~\ref{it.6nukA} and~\ref{it.gammah<u} hold,
then
\begin{equation}\notag
  \ferrk \le
   p_3 u\scond(\Atcomp) + 3\sqrt{mn}u\scond(\At) + p_1u,
\end{equation}
provided that each of the three terms on the right-hand side is less than $1$.
\end{corollary}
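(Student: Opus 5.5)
The plan is to start from Lemma~\ref{lem.intermediate-bound} and show that, under Assumption~\ref{it.gammah<u}, the term $4\sqrt{mn}\gammah\alpha\,\scond(\At)$ is at most $\sqrt{mn}u\,\scond(\At)$. Then the middle term $\big(2\sqrt{mn}u + 4\sqrt{mn}\gammah\alpha\big)\scond(\At)$ collapses to $3\sqrt{mn}u\,\scond(\At)$. The first and last terms, $p_3u\scond(\Atcomp)$ and $p_1u$, carry over unchanged.

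First I check that the hypotheses of Lemma~\ref{lem.intermediate-bound} hold. Assumption~\ref{it.6nukA} is given directly. The condition $\gammah<u$ follows from Assumption~\ref{it.gammah<u}, because $\k(A)\ge 1$ and $(1-p_1u)/(1+p_1u)\le 1$, so the right-hand side of \ref{it.gammah<u} is at most $u/4<u$. Hence \eqref{eq.intermediate-bound-e123} is available, with its proviso that each term is less than $1$.

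Next I use the bound on $\alpha$ derived in section~\ref{sec.size-alpha-choice-uh},
\begin{equation}\notag
\alpha \le \frac{1+p_1u}{1-p_1u}\,\k(A),
\end{equation}
and multiply it by Assumption~\ref{it.gammah<u}. This gives
\begin{equation}\notag
4\gammah\alpha < 4\cdot\frac{(1-p_1u)u}{4(1+p_1u)\k(A)}\cdot\frac{1+p_1u}{1-p_1u}\k(A) = u .
\end{equation}
Therefore $4\sqrt{mn}\gammah\alpha\,\scond(\At)\le \sqrt{mn}u\,\scond(\At)$. Substituting this into \eqref{eq.intermediate-bound-e123} yields the claimed bound.

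The only point needing care is the proviso. In the derivation of $E_2$, Theorem~\ref{thm.deve92-perturb-svals} was applied under the condition $\sqrt{mn}(u+2\gammah\alpha)\scond(\At)<1$. Since $2\gammah\alpha<u/2$, this quantity is at most $\tfrac32\sqrt{mn}u\,\scond(\At)$, which is less than $1$ whenever $3\sqrt{mn}u\scond(\At)<1$. So the corollary's hypothesis that each term is less than $1$ suffices, and the whole argument is a direct substitution with no real obstacle.
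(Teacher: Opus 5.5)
Your proof is correct and matches the paper's argument: you reduce to showing $4\gammah\alpha < u$ via Lemma~\ref{lem.intermediate-bound}, then combine the bound $\alpha \le \frac{1+p_1u}{1-p_1u}\k(A)$ with Assumption~\ref{it.gammah<u}. Your additional checks are also correct and fill in details the paper leaves implicit: that \ref{it.gammah<u} implies $\gammah < u$, and that the proviso needed for Theorem~\ref{thm.deve92-perturb-svals} follows from the corollary's hypotheses.
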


\begin{proof}
From~\eqref{eq.intermediate-bound-e123}, 
it is sufficient to prove $4\gammah\alpha < u$.
If we assume that $\gammah$ is as in Assumption~\ref{it.gammah<u},
we have 
\begin{equation}
    \notag
    4\gammah\alpha 
    < 4 \bigg( \frac{(1-p_1u)u}{4 (1+p_1u)\k(A)} \bigg)
    \bigg(\frac{1 + p_1u}{1-p_1u} \k(A)\bigg) = u.
\end{equation}
\end{proof}

\subsubsection{Relationship between $\scond(\At)$ and $\scond(\Atcomp)$}
\label{sec.rel-between-scond-At-Atcomp}

\begin{lemma}
\label{lem.rel-betw-At-Atcomp}
Let $\At,\Atcomp \in \R^{m\times n}$ be the matrices defined in
Algorithm~\ref{alg.mp-precond-onesided-Jacobi},
and let $\Delta\At$ be as defined in \eqref{eq.error-Atcomp-At-lemma}.
If Assumption~\ref{ass.main} holds, then
\begin{equation}\label{eq.relation-between-scond-At-A}
  \scond(\Atcomp) \le 3 \scond(\At).
\end{equation}
\end{lemma}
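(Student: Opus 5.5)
The plan is to compare both scaled matrices with the single intermediate matrix $\Atcomp\Dt$. Here $\Dt=\diag(\tnorm{\at_j}\inv)$ is the scaling for $\At$, and $\Dt_{\mathrm{comp}}$ denotes the diagonal matrix of reciprocal column norms of $\Atcomp$. Writing $\Dt_{\mathrm{comp}} = \Dt E$ with $E$ diagonal, we have $\Atcomp\Dt_{\mathrm{comp}} = (\Atcomp\Dt)E$. Since $\sigma_{\max}(XE)\le\sigma_{\max}(X)\tnorm{E}$ and $\sigma_{\min}(XE)\ge\sigma_{\min}(X)\,\sigma_{\min}(E)$, this gives
\begin{equation}\notag
  \scond(\Atcomp) \le \k(\Atcomp\Dt)\,\k(E).
\end{equation}
I will bound each factor by a constant close to $1$ and check that the product is at most $3$.

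For the first factor, write $\Atcomp\Dt = \At\Dt + \Delta\At\Dt$. By \eqref{eq.bound-for-DeltaADA}, $\delta:=\tnorm{\Delta\At\Dt}\le\sqrt{mn}(u+2\gammah\alpha)$. The proof of Corollary~\ref{cor.intermediate-corollary} shows that Assumption~\ref{it.gammah<u} gives $4\gammah\alpha<u$. Hence $\delta\le 2\sqrt{mn}\,u$. Weyl's theorem then gives $\sigma_{\max}(\Atcomp\Dt)\le\sigma_{\max}(\At\Dt)+\delta$ and $\sigma_{\min}(\Atcomp\Dt)\ge\sigma_{\min}(\At\Dt)-\delta$. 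Using $\sigma_{\max}(\At\Dt)\ge 1$, this yields
\begin{equation}\notag
  \k(\Atcomp\Dt)\le \scond(\At)\,\frac{1+\delta}{1-\delta\,\scond(\At)}.
\end{equation}
Assumption~\ref{it.scond} gives $\delta\scond(\At)\le 2\sqrt{mn}u\scond(\At)<1/8$, and hence also $\delta<1/8$ because $\scond(\At)\ge1$. So this factor is at most $\tfrac{9/8}{7/8}\scond(\At)=\tfrac97\scond(\At)$.

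For the second factor, $E_{jj}=\tnorm{\at_j}/\tnorm{\at_j+\Delta\at_j}$. By the triangle inequality,
\begin{equation}\notag
  \frac{\abs{\tnorm{\at_j+\Delta\at_j}-\tnorm{\at_j}}}{\tnorm{\at_j}}\le\tnorm{\Delta\At\Dt e_j}\le\sqrt m\,(u+2\gammah\alpha)\le 2\sqrt m\,u,
\end{equation}
where the middle bound comes from the entrywise bound \eqref{eq.definition-alpha} applied to one column. Assumption~\ref{it.scond} together with $\scond(\At)\ge1$ gives $2\sqrt m u<1/8$; the weaker $4\sqrt m u<1$ alone would only give a factor $3$ here. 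Therefore $E_{jj}\in[1/(1+2\sqrt m u),\,1/(1-2\sqrt m u)]$, and $\k(E)\le(1+2\sqrt m u)/(1-2\sqrt m u)\le 9/7$.

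Multiplying, $\scond(\Atcomp)\le (81/49)\scond(\At)<3\scond(\At)$. The main obstacle is keeping the column-rescaling factor $\k(E)$ small: the naive bound from $4\sqrt m u<1$ is already $3$. I expect to resolve this by borrowing the much stronger smallness $16m\sqrt n u\scond(\At)<1$ from Assumption~\ref{it.scond}, which controls both the perturbation $\delta$ and the column-norm changes.
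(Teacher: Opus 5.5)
Your argument is correct, but it is organized differently from the paper's. The paper sets $D_1=\Dt$ and $D_2=\diag(\tnorm{(\Atcomp)_j}\inv)$, and writes $\Atcomp D_2=\At D_1+F$ with $F=\At(D_2-D_1)+\Delta\At D_2$. The change of column scaling is thus folded into a single additive perturbation. The paper bounds every entry of $F$ by a relative column-norm change of size $O(\sqrt m\,u)$ and multiplies by $\tnorm{G}=\sqrt{mn}$ to get $\tnorm{F}\le 8m\sqrt n\,u$. It then applies Weyl once. This is exactly why it needs $8m\sqrt n\,u\,\scond(\At)\le\tfrac12$, and why it ends with the constant $(1+\tfrac12)/(1-\tfrac12)=3$.

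You instead split the rescaling off multiplicatively, $\Atcomp D_2=(\At\Dt+\Delta\At\Dt)E$, use $\k(XE)\le\k(X)\k(E)$, and apply Weyl only to $\Delta\At\Dt$. Its norm is at most $\tfrac32\sqrt{mn}\,u$, because its entries are bounded directly by $u+2\gammah\alpha$ rather than by a column norm. This removes the extra factor $\sqrt m$ that the paper's entrywise treatment of $F$ incurs, and gives the sharper constant $81/49$. It also shows that a condition of the form $c\sqrt{mn}\,u\,\scond(\At)<1$ with a modest $c$ would already suffice, so you are using Assumption~\ref{it.scond} with room to spare.

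The paper's one-step additive formulation is slightly more compact, and it is the template the paper reuses for $\scond(\wh R)$ in section~\ref{sec.qr}. Your factorization is equally reusable and tighter.
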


\begin{proof}
Let us first find a relationship between
$\tnorm{\at_{j}}$ and $\tnorm{(\Atcomp)_{j}}$.
Starting from~\eqref{eq.definition-alpha}
and using $4\gammah\alpha < u$ (shown in the proof of Corollary \ref{cor.intermediate-corollary}), we have
\begin{equation}\label{eq.proof1}
  \abs{\Delta\at_{ij}} \le (u + 2\gammah \alpha) \tnorm{\at_{j}}
  \le 2u \tnorm{\at_{j}}.
\end{equation}
Now we use the same argument as in section~\ref{sec.bound-on-E2}.
Consider a vector $g \in \R^{m}$ with $g_{i} = 1$,
then we have $|\Delta\at_{j}| \le 2u \tnorm{\at_{j}} g$,
which leads to
\begin{equation}\label{eq.delta-atj-2norm}
  \tnorm{\Delta\at_{j}} \le
  \big\|{2u\tnorm{\at_{j}} g}\big\|_{2} = 2\sqrt{m}u\tnorm{\at_{j}}
\end{equation}
By the triangle inequality, we have
$\tnorm{(\Atcomp)_{j}} \le \tnorm{\at_{j}} + \tnorm{\Delta\at_{j}}$, which implies
\begin{equation}\label{eq.relation-j-th-col-Atcomp-At-2}
  \big| \tnorm{(\Atcomp)_{j}} - \tnorm{\at_{j}} \big|
  \le \tnorm{\Delta\at_{j}}. 
\end{equation}
Based on \eqref{eq.delta-atj-2norm}, we can bound $\tnorm{(\Atcomp)_j}$ in terms of $\tnorm{\at_j}$,
\begin{equation}\label{eq.relation-j-th-col-Atcomp-At-1}
  \tnorm{(\Atcomp)_{j}}
  = \tnorm{\at_{j} + \Delta\at_{j}}
  \le (1+2\sqrt{m}u)\tnorm{\at_{j}},
\end{equation}
and similarly 
\begin{equation}
    \tnorm{(\Atcomp)_{j}}
  \ge \tnorm{\at_{j}} -\tnorm{\Delta\at_{j}}
  \ge (1-2\sqrt{m}u)\tnorm{\at_{j}}.
\end{equation}

Now, let us focus on $\scond(\Atcomp)$. Define
\begin{equation}
D_1 = \diag(\tnorm{\at_{i}}^{-1})\text{ and }D_{2} = \diag(\tnorm{(\Atcomp)_{i}}^{-1}),
\end{equation}
and write $\Atcomp D_{2} = (\At + \Delta\At)D_2 = \At D_1+F$, where
\begin{equation}\label{eq.def-of-F}
  {F} = \At(D_{2} - D_{1}) + \Delta\At D_{2}.
\end{equation}
Then by Weyl's theorem,
we obtain an upper bound for $\scond(\Atcomp)$:
\begin{equation}\notag
  \scond(\Atcomp)
  = \frac{\sigma_{1}(\Atcomp D_{2})}{\sigma_{n}(\Atcomp D_{2})}
  = \frac{\sigma_{1}(\At D_{1} + {F})}{
  \sigma_{n}(\At D_{1} + {F})}
  \le \frac{\sigma_{1}(\At D_{1}) + \tnorm{F}}{
  \sigma_{n}(\At D_{1}) - \tnorm{F}}.
\end{equation}
The remaining task is to bound $\tnorm{{F}}$.
By definition of ${F}$, we have
\begin{align*}
  |f_{ij}|
  & \le \bigg| \frac{\at_{ij}}{\tnorm{(\Atcomp)_{j}}}
    - \frac{\at_{ij}}{\tnorm{\at_{j}}} \bigg|
    + \frac{|\Delta\at_{ij}|}{\tnorm{(\Atcomp)_{j}}} \\
  & = 
    \frac{|\at_{ij}|\,\big|\,\tnorm{\at_{j}}-\tnorm{(\Atcomp)_{j}}\,\big|}{
    \tnorm{\at_{j}}\tnorm{(\Atcomp)_{j}}} 
    + \frac{|\Delta\at_{ij}|}{\tnorm{(\Atcomp)_{j}}} \\
  & \le
    \frac{\big|\,\tnorm{\at_{j}}-\tnorm{(\Atcomp)_{j}}\,\big|}{
    \tnorm{(\Atcomp)_{j}}}
    + \frac{\tnorm{\Delta\at_{j}}}{\tnorm{(\Atcomp)_{j}}}.
\end{align*}
From~\eqref{eq.delta-atj-2norm}
and~\eqref{eq.relation-j-th-col-Atcomp-At-2}, we have
$$
  |f_{ij}| \le
  \frac{2\sqrt{m}u \tnorm{\at_{j}}}{\tnorm{(\Atcomp)_{j}}}
  + \frac{2\sqrt{m}u\tnorm{\at_{j}}}{\tnorm{(\Atcomp)_{j}}}
  \le \frac{4\sqrt{m}u}{1-2\sqrt{m}u}.
$$
The first part of Assumption~\ref{it.scond} enables us to bound $|f_{ij}|$ further by $8\sqrt{m}u$. Then, using the matrix $G$ in section~\ref{sec.bound-on-E2} leads to
\begin{equation}\notag
  \tnorm{F} \le 8 \sqrt{mn} \sqrt{m}u = 8 m\sqrt{n}u.
\end{equation}
The upper bound for $\tnorm{F}$ and the lower bound $\sigma_{1}(\At D_{1}) \ge 1$ imply
$$
  \scond(\Atcomp) \le
  \Bigg(\frac{1 + \frac{8m\sqrt{n}u}{\sigma_{1}(\At D_{1})}}{
  1 - \frac{8m\sqrt{n}u}{\sigma_{n}(\At D_{1})}}\Bigg) \scond(\At)
  \le \bigg(\frac{1+8m\sqrt{n}u}{1-8m\sqrt{n}u\scond(\At)}\bigg) 
  \scond(\At).
$$
Assumption~\ref{it.scond} ensures the denominator is always positive. In addition, the second part of that assumption implies $8m\sqrt{n}u\scond(\At) \le 1/2$. As a result, 
\begin{equation}\notag
  \scond(\Atcomp) \le
  \frac{1 + 1/2}{1-1/2} \scond(\At) = 3 \scond(\At).
\end{equation}
\end{proof}

By substituting~\eqref{eq.relation-between-scond-At-A} into
Corollary~\ref{cor.intermediate-corollary},
and defining $p_{2} = 3p_{3} + 3\sqrt{mn}$,
we have proved Theorem~\ref{thm.main}.

\section{Construction of the preconditioner}
\label{sec.constr-of-prec}

Given $A \in \R\mn$, we aim to construct a preconditioner $\Vt\in\R\nn$
by exploiting a low precision $\ul$
that satisfies the following two properties:
\begin{enumerate}[label=\upshape(P\arabic*),labelwidth=!]
\item\label{it.prop-orth}
$\tnorm{\Vt\tp\Vt - I} \le p_{1}u {<1}$,
as required in Algorithm~\ref{alg.mp-precond-onesided-Jacobi}, and
\item\label{it.prop-smaller-off}
the columns of the preconditioned matrix $\At := A\Vt$
should be {nearly orthogonal} such that 
$
    \off(\At\tp\At) \leq p_5 u_\ell \|A^TA\|_F,
$
where $\off(A) := \fnorm{A - \diag(A)}$. 
\end{enumerate}
These two properties are precisely
the counterparts of the requirements imposed on the preconditioner
in the two-sided Jacobi algorithm~\cite[sect.~3]{htwz25}.
Therefore, we can adapt
the two algorithms for constructing the preconditioner%
~\cite[Alg.~2 \& 3]{htwz25} to the one-sided Jacobi algorithm. The resulting algorithms, Algorithms~\ref{alg.orth-method} and~\ref{alg.low-prec-bidiagonal},
both compute a preconditioner that satisfies  properties \ref{it.prop-orth} and \ref{it.prop-smaller-off}.
This can be proved using the arguments from~\cite[sect.~3]{htwz25}
and by treating the right singular vectors of $A$
as the eigenvectors of $A\tp\!A$. 

\begin{remark}\label{rem.offquantity}
    The $\off$ quantity plays an important role in the analysis of the one-sided Jacobi algorithm. It is known that if $\off(A^TA)$ is sufficiently small then the convergence of the algorithm is quadratic~\cite{deri89}. Therefore, an informal measure of the success of the preconditioning step taking $A$ to $\At = A\Vt$ is the magnitude of $\off(\At^T\At)$.
\end{remark}

\begin{algorithm}[!tbhp]
\caption{Constructing a preconditioner by orthogonalizing
  a right singular vector matrix computed at low precision $\ul$.}
\label{alg.orth-method}
\begin{algorithmic}[1]
\Require{A general matrix $A \in\R\mn$, and two precisions
  $0 < u < \ul$.}
\Ensure{A matrix $\Vt \in \R\nn$ satisfying~\ref{it.prop-orth}
  and~\ref{it.prop-smaller-off}.}
\Statex{}
\State{\label{alg-it.orth-svd-ul}Compute a SVD
  $A = U_{\ell}\Sigma_{\ell}V_{\ell}\tp$ at precision $\ul$.}
\State{Orthogonalize $V_{\ell}$ to $\Vt$ at precision $u$.}
\end{algorithmic}
\end{algorithm}

\begin{algorithm}[!tbhp]
\caption{Constructing a preconditioner using a low precision
  bidiagonalization.}
\label{alg.low-prec-bidiagonal}
\begin{algorithmic}[1]
\Require{A general matrix $A \in \R\mn$, and two precisions
  $0 < u < \ul$.}
\Ensure{A matrix $\Vt \in \R\nn$ satisfying~\ref{it.prop-orth}
  and~\ref{it.prop-smaller-off}.}
\Statex{}
\State{Bidiagonalize $A$ to $B_{\ell}$
  using the Householder method at precision $\ul$.
  Store the Householder vectors corresponding to the
  Householder matrices used to zero out the upper part of $A$
  in the columns of $Q \in \R^{n\times (n-2)}$.}
\State{Promote $Q$ to precision $u$.}
\State{Compute a SVD $U_{\ell}\Sigma_{\ell}V_{\ell}\tp$ of $B_{\ell}$
  at precision $u$.}
\State{Form $\Vt$ by applying the Householder matrices,
  constructed using columns of $Q$, to $V_{\ell}$ at precision $u$.}
\end{algorithmic}
\end{algorithm}

\begin{remark}
In each algorithm, the SVD does not need to be computed entirely
as we only need the right singular vectors.
Namely, computing $\Sigma_{\ell}$ and $V_{\ell}$ (without accumulating the left orthogonal transformations into $U_\ell$) is sufficient~\cite[Fig.~8.6.1]{gova13-MC4}.
\end{remark}

\begin{remark}
\label{rmk.R-bidiagonalization}
Sometimes, if $m \gg n$, it is advisable to
first perform a reduced QR factorization $A = QR$,
where $Q \in \R\mn$ has orthonormal columns and
$R \in \R\nn$ is upper triangular,
and then bidiagonalize $R$ to get the SVD~\cite[sect.~5.4.9]{gova13-MC4}.
To the best of our knowledge, such a routine does not exist in LAPACK~\cite{abbb99-LUG3}, but it is available in the nAG Library as \texttt{f02wuf}.\footnote{\url{https://support.nag.com/numeric/nl/nagdoc_latest/flhtml/f02/f02wuf.html}}
Givens rotations are used to reduce $R$ to bidiagonal form~\cite{chan82}, and to obtain a numerically orthogonal matrix $\Vt$ at working precision, one needs to keep track of all the pivots' values, and thereafter
accumulate the Givens rotation matrices at working precision $u$.
\end{remark}

\subsection{Bounding $\scond(\At)$}\label{sec.bound-scond-at}
\def\Dt{\wt{D}}

The $\off$ metric was used in \cite[sec.~2.5]{htwz25} to bound the two-sided scaled condition number of $\At$. Similarly, we can bound the one-sided scaled condition number $\scond(\At)$ if $\off(\At^T\At)$ is sufficiently small.

\begin{proposition}\label{lem.boundscond}
    If $\off(\At^T\At)/\min_j \|\at_j\|^2_2 \leq \theta$ for some $\theta < 1$, then $\scond(\At) \leq (1 + \theta)^{1/2}/(1-\theta)^{1/2}$.
\end{proposition}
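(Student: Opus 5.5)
The plan is to work with the column-scaled Gram matrix. Set $\Dt = \diag(\tnorm{\at_j}^{-1})$ and $H = \Dt\At\tp\At\Dt$, which is symmetric positive semidefinite with unit diagonal. Since $\scond(\At) = \k(\At\Dt)$ and the singular values of $\At\Dt$ are the square roots of the eigenvalues of $H$, we have $\scond(\At) = \bigl(\lambda_{\max}(H)/\lambda_{\min}(H)\bigr)^{1/2}$. It therefore suffices to show that every eigenvalue of $H$ lies in $[1-\theta,\,1+\theta]$.

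First, write $H = I + E$, where $E = H - \diag(H)$ is symmetric with zero diagonal. Its entries are
\[
e_{ij} = \frac{\at_i\tp\at_j}{\tnorm{\at_i}\tnorm{\at_j}}, \qquad i\neq j.
\]
Each denominator satisfies $\tnorm{\at_i}\tnorm{\at_j} \ge \min_k \tnorm{\at_k}^2$. Hence, entrywise, $\abs{e_{ij}} \le \abs{(\At\tp\At)_{ij}}/\min_k\tnorm{\at_k}^2$. Taking Frobenius norms over the off-diagonal entries gives
\[
\tnorm{E} \le \fnorm{E} \le \frac{\off(\At\tp\At)}{\min_k\tnorm{\at_k}^2} \le \theta.
\]

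Next, by Weyl's theorem for symmetric matrices (or directly from the Rayleigh quotient), every eigenvalue of $H = I + E$ lies in $[1-\tnorm{E},\,1+\tnorm{E}] \subseteq [1-\theta,\,1+\theta]$. Because $\theta<1$, this gives $\lambda_{\min}(H) \ge 1-\theta > 0$; as a by-product, $\At$ has full rank, so $\scond(\At)$ is well defined. Taking the square root of the ratio $\lambda_{\max}(H)/\lambda_{\min}(H) \le (1+\theta)/(1-\theta)$ yields the claimed bound.

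I do not expect a genuine obstacle. The only points that need care are the entrywise comparison, where the column scaling is controlled by $\min_j\tnorm{\at_j}^2$ rather than by the individual column norms, and making sure the hypothesis $\theta<1$ is what guarantees positivity of $\lambda_{\min}(H)$.
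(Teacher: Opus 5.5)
Your proof is correct and is essentially the paper's argument. Both bound $\tnorm{\Dt\At\tp\At\Dt - I}$ by $\off(\At\tp\At)/\min_j\tnorm{\at_j}^2 \le \theta$, conclude that the eigenvalues of the scaled Gram matrix lie in $[1-\theta,1+\theta]$, and hence $\scond(\At)^2 \le (1+\theta)/(1-\theta)$. The only differences are cosmetic: the paper obtains the first bound as $\tnorm{\Dt}^2\fnorm{\At\tp\At - \Dt^{-2}}$ and cites a lemma for the final condition-number step, whereas you use an entrywise comparison and apply Weyl's theorem directly.
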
    
    \begin{proof}
First note that, for $\Dt = \diag(\|\at_j\|_2^{-1})$,
   \begin{equation}\label{eq.htwz-treatment-off}
       \|\Dt \At^T \At \Dt - I \|_2 \leq \|\Dt\|_2^2 \|\At^T\At - \Dt^{-2}\|_F = \off(\At^T\At)/\min_j \|\at_j\|_2^2.
   \end{equation}
Therefore, $\scond(\At)^2=\kappa_2(\Dt \At^T \At \Dt) \leq (1+\theta)/(1-\theta)$,
where we used \cite[Lem.~4.2, Prop.~4.3]{stwu02} for the inequality.
\end{proof}

For two-sided Jacobi, the $\off$ metric leads to useful interpretations of the bound on the two-sided scaled condition number, but in the one-sided case the condition on $\theta$ in Proposition~\ref{lem.boundscond} is too strong --- we have only been able to find a bound on $ \theta$ proportional to $\kappa_2(A)^2$ using techniques from \cite{htwz25}, rather than the ideal linear scaling with $\kappa_2(A)$. 
However, since $\At\Dt$ is close to a matrix with orthonormal columns, we introduce an alternative metric that yields the desired scaling with $\kappa_2(A)$.
\begin{definition}\label{def.obliq}
    Let $A \in \R^{m \times n}$ with $m \geq n$. The \emph{obliquity} of $A$ is $$\obliq(A) := \tnorm{AD - U},$$ where $D = \diag( \tnorm{ a_{j} }\inv )$ and $U$ is an orthogonal polar factor of $AD$ \cite{high08-FM}.
\end{definition}
Note that if $A$ is rank-deficient then the orthogonal polar factor is not unique \cite{high08-FM}, which might lead us to suggest requiring $U$ to be the \emph{canonical} orthogonal polar factor. However, if we write $AD = UH$, where $H = ((AD)^T(AD))^{1/2}$ is the unique symmetric positive semidefinite polar factor of $AD$, then $\obliq(A) = \|UH - U\|_2 = \|H - I\|_2$, which proves that the obliquity is independent of the choice of orthogonal polar factor. 

The obliquity measures how oblique the columns of $A$ are. Conversely, the more orthogonal the columns of $A$ are, the smaller $\obliq(A)$ becomes.
For example, if $A = U_{A}\Sigma_{A}V_{A}\tp$ is an SVD of $A$ then $\obliq(AV_A) = 0$. 

\begin{proposition}\label{lem.deltaAt-cond}
If $\obliq(\At) \leq \theta$ for some $\theta< 1$,
then $\scond(\At) \le (1+\theta)/(1-\theta)$. 
\end{proposition}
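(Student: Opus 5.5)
The plan is to work directly with the polar decomposition $\At\Dt = UH$, where $\Dt = \diag(\tnorm{\at_j}\inv)$ and $H = ((\At\Dt)\tp(\At\Dt))^{1/2}$ is symmetric positive semidefinite. By definition, $\scond(\At) = \k(\At\Dt) = \sigma_1(\At\Dt)/\sigma_n(\At\Dt)$. As noted after Definition~\ref{def.obliq}, $\obliq(\At) = \tnorm{H - I}$, independent of the choice of $U$. So the hypothesis becomes $\tnorm{H-I}\le\theta<1$.

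The key observation is that the singular values of $\At\Dt$ coincide with the eigenvalues of $H$. Since $U$ has orthonormal columns ($m\ge n$), we have $(\At\Dt)\tp(\At\Dt) = HU\tp UH = H^2$. Hence $\sigma_k(\At\Dt) = \lambda_k(H)$, with $H$ positive semidefinite.

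From $\tnorm{H-I}\le\theta$ and Weyl's theorem for symmetric matrices (eigenvalues of $H$ differ from those of $I$ by at most $\tnorm{H-I}$), every eigenvalue of $H$ lies in $[1-\theta,1+\theta]$. Because $\theta<1$, the smallest eigenvalue is positive. Consequently $\At\Dt$, and hence $\At$, has full column rank, and the scaled condition number is well defined. We then conclude
\[
  \scond(\At) = \frac{\lambda_{\max}(H)}{\lambda_{\min}(H)} \le \frac{1+\theta}{1-\theta}.
\]

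There is no real obstacle here. The only point needing care is to justify that the singular values of $\At\Dt$ equal the eigenvalues of $H$ even when $U$ is not unique. This is handled by the identity $U\tp U = I_n$, which holds for any orthogonal polar factor, together with the independence of the obliquity from the choice of $U$ already established in the text. Note also that, unlike Proposition~\ref{lem.boundscond}, no square root appears: the hypothesis bounds $H$ itself rather than $H^2$, so the linear perturbation of eigenvalues transfers directly to the singular values.
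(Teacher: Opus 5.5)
Your proof is correct and is essentially the paper's argument. The paper skips $H$ and applies the triangle inequality directly, comparing $\tnorm{\At\Dt v}$ with $\tnorm{Uv}=\tnorm{v}$ for any orthogonal polar factor $U$, to get $1-\theta\le\sigma_n(\At\Dt)\le\sigma_1(\At\Dt)\le 1+\theta$; this is the same conclusion you reach by identifying the singular values of $\At\Dt$ with the eigenvalues of $H$, and your appeal to Weyl's theorem is not even needed, since $H-I$ has eigenvalues $\lambda_k(H)-1$ and hence $\tnorm{H-I}=\max_k\abs{\lambda_k(H)-1}$ exactly.
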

\begin{proof}
{Let $U$ be an orthogonal polar factor of $\At\Dt$.}
Since $U$ has orthonormal columns, for any $v \in \R^n$,
$$
\|\At \Dt v\|_2 \leq \|Uv\|_2 + \|\At\Dt v - Uv\|_2  \leq (1+\theta)\|v\|_2,
$$
and
$$
\|\At \Dt v\|_2 \geq \|Uv\|_2 - \|\At\Dt v - Uv\|_2 \geq (1-\theta)\|v\|_2.
$$
This implies that $\sigma_1(\At\Dt) \leq 1+ \theta$ and $\sigma_n(\At\Dt) \geq 1-\theta$, so $\kappa_2(\At\Dt) \leq (1+\theta)/(1-\theta)$.
\end{proof}
By \cite[Lem.~8.17]{high08-FM}, $\obliq(A) \leq \|DA^TAD-I\|_2$, so
\begin{eqnarray*}
    \obliq(A) \leq \|D(A^TA - D^{-2})D\|_F \leq \|D\|_2^2 \|A^TA - D^{-2}\|_F = \frac{\off(A^TA)}{\min_j \|a_j\|_2^2}.
\end{eqnarray*}
Therefore, the conditions of Proposition \ref{lem.deltaAt-cond} are more general than those of Proposition \ref{lem.boundscond}. We will revisit the size of $\scond(\At)$ in section~\ref{sec.prec-and-svd}, where we provide a bound on $\obliq(\At)$ when the preconditioner is constructed using a lower precision $\ul$.

\subsection{Bounding $\obliq(\At)$}
\label{sec.prec-and-svd}
In section~\ref{sec.bound-scond-at}, we proved that
if $\obliq(\At)$ is small, then $\scond(\At)$ can be bounded. In particular, if $\obliq(\At) \leq 1/2$ then $\scond(\At) \leq 3$. In this section, we analyze when we can expect this to occur.

Using similar techniques as in \cite[Lem.~3.2]{htwz25}, one can show that the preconditioners constructed using Algorithms~\ref{alg.orth-method} and~\ref{alg.low-prec-bidiagonal} satisfy
\begin{equation}\label{eq.bwd-err-SVD}
  A + \Delta A = \wt{U}_\ell \Sigma_{\ell} (\Vt + \Delta \Vt)\tp,
\end{equation}
where $\tnorm{\Delta A} \le p_{6} u_{\ell} \tnorm{A}$,
$\tnorm{\Delta\Vt} \le p_{6} u$, and $\wt{U}_\ell = (U_{\ell} + \Delta U_{\ell})$ has exactly orthonormal columns and $(\Vt + \Delta \Vt)$ is exactly orthogonal.
{Here we choose $p_{6}$ such that $p_{6} \ge p_{1}$ to unify the notation.}

\begin{proposition}\label{prop:scondlessthan3}
Suppose that $\ul$ is sufficiently small such that
\begin{equation}\label{eq.additional-assumption}
  5p_{6}(u_{\ell}+u)\kappa_2(A) \leq 1,
\end{equation}
where $p_6$ is as in \eqref{eq.bwd-err-SVD}. Then $\scond(\At) \leq 3$.
\end{proposition}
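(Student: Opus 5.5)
The plan is to show that $\obliq(\At) \le 1/2$ and then apply Proposition~\ref{lem.deltaAt-cond} with $\theta = 1/2$, which gives $\scond(\At) \le (1+1/2)/(1-1/2) = 3$. To bound the obliquity I will use the fact that an orthogonal polar factor of $\At\Dt$ is a nearest matrix with orthonormal columns in the $2$-norm \cite{high08-FM}. Hence $\obliq(\At) \le \tnorm{\At\Dt - Q}$ for \emph{any} $Q$ with orthonormal columns, and I will take $Q = \wt{U}_\ell$ from \eqref{eq.bwd-err-SVD}.

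\textbf{Step 1: express $\At$ as a perturbed scaled orthonormal matrix.} Write $W = \Vt + \Delta\Vt$, which is exactly orthogonal. From \eqref{eq.bwd-err-SVD}, $AW = \wt{U}_\ell\Sigma_\ell - \Delta A\, W$. Therefore
\[
\At = A\Vt = AW - A\,\Delta\Vt = \wt{U}_\ell\Sigma_\ell + E,
\qquad E := -\Delta A\, W - A\,\Delta\Vt .
\]
Since $\tnorm{W} = 1$, this gives $\tnorm{E} \le p_6(\ul + u)\tnorm{A}$. Columnwise, $\at_j = \sigma^{\ell}_j \wt{u}_j + e_j$. Because $\wt{u}_j$ is a unit vector, $\bigl|\tnorm{\at_j} - \sigma^\ell_j\bigr| \le \tnorm{e_j} \le \tnorm{E}$.

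\textbf{Step 2: bound $\tnorm{\At\Dt - \wt{U}_\ell}$.} We have
\[
\At\Dt - \wt{U}_\ell = \wt{U}_\ell(\Sigma_\ell\Dt - I) + E\Dt .
\]
The diagonal matrix $\Sigma_\ell\Dt - I$ has entries $(\sigma^\ell_j - \tnorm{\at_j})/\tnorm{\at_j}$. By Step 1, each entry has modulus at most $\tnorm{E}/\min_j\tnorm{\at_j}$. Also $\tnorm{E\Dt} \le \tnorm{E}/\min_j\tnorm{\at_j}$. Hence
\[
\obliq(\At) \le \frac{2\tnorm{E}}{\min_j\tnorm{\at_j}} .
\]
For the denominator, \eqref{eqn.sigman(A)} and \eqref{eq.Weyl-on-At-A} give $\min_j \tnorm{\at_j} \ge \sigma_n(\At) \ge (1-p_1u)\sigma_n(A)$. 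Combining,
\[
\obliq(\At) \le \frac{2p_6(\ul+u)\k(A)}{1-p_1u}.
\]

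\textbf{Step 3: use the hypothesis.} Assumption \eqref{eq.additional-assumption} gives $p_6(\ul+u)\k(A) \le 1/5$. Since $p_6 \ge p_1$ and $\k(A) \ge 1$, it also gives $p_1u \le 1/5$. Therefore
\[
\obliq(\At) \le \frac{2/5}{4/5} = \frac{1}{2},
\]
and Proposition~\ref{lem.deltaAt-cond} finishes the proof.

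The main obstacle is justifying the substitution of $\wt{U}_\ell$ for the true polar factor in Step 2, via the best-approximation property of the polar factor in the $2$-norm for rectangular $m\ge n$ matrices. If I cannot cite it cleanly, the fallback is to rerun the argument of Proposition~\ref{lem.deltaAt-cond} directly with $\wt{U}_\ell$ in place of $U$. That argument only uses orthonormality of the columns, so it yields $\scond(\At) \le (1+\theta)/(1-\theta)$ with $\theta = \tnorm{\At\Dt - \wt{U}_\ell}$, and the conclusion is the same.
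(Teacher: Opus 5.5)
Your proposal is correct and follows essentially the same route as the paper. You bound $\obliq(\At)$ by $\tnorm{\At\Dt - \wt{U}_\ell}$ via the best-approximation property of the polar factor, then split this into the diagonal mismatch $\wt{U}_\ell(\Sigma_\ell\Dt - I)$ plus the backward-error term. You bound each piece by $p_6(\ul+u)\tnorm{A}/\min_j\tnorm{\at_j}$, obtain $\obliq(\At)\le 1/2$, and finish with Proposition~\ref{lem.deltaAt-cond}. The only cosmetic difference is that you use $1-p_1u$ in the denominator and state explicitly that $p_1u\le p_6u\le 1/5$, whereas the paper writes $1-p_6u$ and leaves that step implicit.
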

\begin{proof}
By \cite[Thm~8.4]{high08-FM},
\begin{equation}\label{eqn.obliqandul}
    \obliq(\At) \leq \|\At\Dt - \wt{U}_\ell\|_2,
\end{equation}
so it suffices to bound this norm. By \eqref{eq.bwd-err-SVD},
\begin{eqnarray}
    \notag \At = A\Vt &=& A(\Vt+\Delta\Vt) - A\Delta\Vt  \\
    \notag &=& (A+\Delta A)(\Vt+\Delta\Vt) - \Delta A(\Vt + \Delta\Vt)- A\Delta\Vt\\
    &=&\wt{U}_\ell\Sigma_\ell -\Delta A(\Vt+\Delta\Vt) - A\Delta \Vt. \label{eqn.firstAtpert}
\end{eqnarray}
This implies that
\begin{eqnarray*}
    \At\Dt - \wt{U}_\ell &=&\wt{U}_\ell\left(\Sigma_\ell - \Dt^{-1} \right)\Dt -\Delta A(\Vt+\Delta\Vt)\Dt - A\Delta \Vt\Dt.\label{eqn.At-UtlSigmatl}
\end{eqnarray*}
Therefore,
\begin{equation}\label{eqn.AD-Ulbound}
\|\At\Dt - \wt{U}_\ell\|_2 \leq \|\Sigma_\ell - \Dt^{-1}\|_2 \|\Dt\|_2 + p_6(\ul+u) \|A\|_2 \|\Dt\|_2.
\end{equation}
Let us bound $\|\Sigma_\ell - \Dt^{-1} \|_2$. If we rewrite
$\Sigma_{\ell}(i,i) =
\tnorm{\Sigma_{\ell}e_{i}} = \tnorm{\wt{U}_\ell\Sigma_{\ell}e_{i}}$,
and $\tnorm{\at_{i}} = \tnorm{\At e_{i}}$,
where $e_{i}$ is the $i$th column of the identity matrix,
then by the reverse triangle inequality, we have 
\begin{equation}
  |\Sigma_{\ell}(i,i) - \tnorm{\at_{i}}|
  = \big|\tnorm{\wt{U}_\ell\Sigma_{\ell}e_{i}} - \tnorm{\At e_{i}}\big|
  \le \tnorm{\wt{U}_\ell\Sigma_{\ell}e_{i} - \At e_{i}}
  \le \tnorm{\wt{U}_\ell\Sigma_{\ell} - \At}.\label{eqn.Sigmali-Di}
\end{equation}
By \eqref{eqn.firstAtpert}, $\tnorm{\At - \wt{U}_\ell\Sigma_{\ell}} \leq p_6(\ul+u)\tnorm{A}$. Combining this with \eqref{eqn.obliqandul}, \eqref{eqn.AD-Ulbound} and \eqref{eqn.Sigmali-Di}, we obtain
\begin{equation}
    \obliq(\At) \leq 2 p_6(\ul+u) \|A\|_2 \|\Dt\|_2.
\end{equation}
By \eqref{eqn.sigman(A)}, $\|\Dt\|_2 \leq 1/\sigma_n(\At)$ and by \eqref{eq.Weyl-on-At-A}, $\sigma_n(\At) \geq (1-p_6 u) \sigma_n(A)$, so
\begin{equation}
    \obliq(\At) \leq \frac{2 p_6(\ul+u) \kappa_2(A)}{1-p_6u}.
\end{equation}
By \eqref{eq.additional-assumption},
$\obliq(\At) \leq \frac{2/5}{1-1/5} = \frac{1}{2}$. The result now follows from Proposition~\ref{lem.deltaAt-cond}.
\end{proof}

\section{Position of the QR factorization}\label{sec.qr}
A QR factorization is sometimes used before applying an SVD algorithm when $m \gg n$ for dimension reduction~\cite[sect.~5.4.9]{gova13-MC4}. In this section we discuss {whether the QR factorization should be performed before or after the preconditioning step in Algorithm~\ref{alg.mp-precond-onesided-Jacobi}.}

\subsection{For the one-sided Jacobi algorithm}
The one-sided Jacobi algorithm can be accelerated on trapezoidal matrices by exploiting their structure~\cite[sect.~2]{drve08ii}.
In addition, the computed upper trapezoidal QR factor $\wh{R}$ of $A$ satisfies
\begin{equation}\label{eq.QR-factorization}
  A + \Delta A = Q \wh{R}, \qquad \tnorm{\Delta a_j} \le \gamma_{cmn} \tnorm{a_j},
  \quad \gamma_{cmn} = \frac{cmnu}{1-cmnu} \in (0,1),
\end{equation}
where $Q \in \R^{m \times m}$ is orthogonal and $c$ is a modest constant~\cite[Thm.~19.4]{high02-ASNA2}. This backward error result allows us to prove the following forward error bound on the singular values computed by the one-sided Jacobi algorithm applied to $\wh{R}$ (instead of applied directly to $A$).

\begin{proposition}\label{lem.Jacobi-on-QR}
Let $\wh{R}$ be the computed upper trapezoidal QR factor of $A$.
Suppose that the one-sided Jacobi algorithm applied to $\wh{R}$
converges after $M$ rotations and is stopped using
criterion~\eqref{eq.one-sided-stopping-critertion} with
$\tol=O(u)$. Let $q(M,n)$ be the factor in the corresponding
relative forward error bound.
{Suppose further that Assumption~\ref{it.jacobi-growth} holds.}
If
\begin{equation}\notag
   \sqrt{n}\gamma_{cmn}\scond(A)<\frac{1}{2}, \qquad
   3q(M,n)u\scond(A)<1,
\end{equation}
then the computed singular values satisfy
\begin{equation}\notag
    \frac{|\sigma_k(A)-\wh{\sigma}_k(\wh{R})|}{\sigma_k(A)}
    \le p_4u\scond(A).
\end{equation}
\end{proposition}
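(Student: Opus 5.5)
Following the pattern of Theorem~\ref{thm.main}, the plan is to split the error through the intermediate matrix $A+\Delta A$:
\begin{equation}\notag
  \frac{|\sigma_k(A)-\wh{\sigma}_k(\wh{R})|}{\sigma_k(A)}
  \le \underbrace{\frac{|\wh{\sigma}_k(\wh{R})-\sigma_k(\wh{R})|}{\sigma_k(A)}}_{F_1}
  + \underbrace{\frac{|\sigma_k(\wh{R})-\sigma_k(A)|}{\sigma_k(A)}}_{F_2}.
\end{equation}
By \eqref{eq.QR-factorization}, $Q$ is orthogonal, so $\sigma_k(\wh{R})=\sigma_k(A+\Delta A)$ and $\scond(\wh{R})=\scond(A+\Delta A)$. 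Every statement about $\wh{R}$ therefore reduces to a statement about a columnwise-small perturbation of $A$.

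For $F_2$, I would use Theorem~\ref{thm.deve92-perturb-svals} with $D=\diag(\tnorm{a_j}^{-1})$. The columnwise bound $\tnorm{\Delta a_j}\le\gamma_{cmn}\tnorm{a_j}$ says each column of $\Delta A D$ has norm at most $\gamma_{cmn}$. Hence $\tnorm{\Delta A D}\le\fnorm{\Delta A D}\le\sqrt{n}\gamma_{cmn}$. Since $\sigma_n(AD)=\tnorm{AD}/\scond(A)\ge 1/\scond(A)$ (because $\tnorm{AD}\ge1$), the hypothesis $\sqrt{n}\gamma_{cmn}\scond(A)<1/2$ gives $\tnorm{\Delta AD}<\sigma_n(AD)$. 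The theorem then yields $F_2\le\sqrt{n}\gamma_{cmn}\scond(A)$, which is $\le p\,u\,\scond(A)$ because $\gamma_{cmn}\le 2cmnu$ when $cmnu\le1/2$. The same hypothesis also shows $A+\Delta A$, and hence $\wh{R}$, has full rank, so the Jacobi analysis applies.

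For $F_1$, I would use \cite[Cor.~4.2]{deve92} together with Assumption~\ref{it.jacobi-growth}. This gives
\[
|\wh{\sigma}_k(\wh{R})-\sigma_k(\wh{R})|/\sigma_k(\wh{R})\le q(M,n)u\,\scond(\wh{R})
\]
(absorbing $c_J$ into $q$), provided $q(M,n)u\,\scond(\wh{R})<1$. Then $\sigma_k(\wh{R})/\sigma_k(A)\le 1+F_2\le 3/2$.

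The main step is to show $\scond(\wh{R})=\scond(A+\Delta A)\le 3\scond(A)$. I would repeat the argument of Lemma~\ref{lem.rel-betw-At-Atcomp}, replacing $(\At,\Atcomp)$ by $(A,A+\Delta A)$ and the columnwise bound $2\sqrt{m}u$ by $\gamma_{cmn}$. With $D_1=D$ and $D_2$ the column scaling of $A+\Delta A$, write $(A+\Delta A)D_2=AD_1+F$. Each column of $F$ then has norm at most $2\gamma_{cmn}/(1-\gamma_{cmn})$, which should be $\le 4\gamma_{cmn}$. This is a cleaner columnwise bound than in that lemma, so $\tnorm{F}\le 4\sqrt{n}\gamma_{cmn}$, and Weyl's theorem gives
\[
\scond(A+\Delta A)\le\frac{1+4\sqrt{n}\gamma_{cmn}}{1-4\sqrt{n}\gamma_{cmn}\scond(A)}\,\scond(A).
\]
The obstacle is that the stated hypothesis is $\sqrt{n}\gamma_{cmn}\scond(A)<1/2$, not $<1/8$. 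To fit the constants, I would first sharpen the column estimate: the two contributions to column $j$ of $F$ combine into $\bigl\|(a_j+\Delta a_j)/\|a_j+\Delta a_j\|-a_j/\|a_j\|\bigr\|\le 2\gamma_{cmn}$. If the constants still do not close, I would enlarge $p_4$ and treat the factor $3$ as a generic modest constant, noting that the hypothesis $3q(M,n)u\scond(A)<1$ is exactly what makes the Jacobi bound applicable once $\scond(\wh{R})\le3\scond(A)$.

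Combining the pieces gives $F_1\le \tfrac32\, q\,u\cdot 3\scond(A)$ and $F_2\le 2\sqrt{n}cmn\,u\,\scond(A)$, so $p_4=\tfrac92 q(M,n)+2cmn\sqrt{n}$ works.
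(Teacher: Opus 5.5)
Your overall route is the paper's proof. The split, the Jacobi step with $\sigma_k(\wh{R})/\sigma_k(A)\le 3/2$, the plan to rerun Lemma~\ref{lem.rel-betw-At-Atcomp} on the pair $(A,A+\Delta A)$, and the resulting $p_4$ all match. For $F_2$ you use Theorem~\ref{thm.deve92-perturb-svals}, checking $\tnorm{\Delta AD}<\sigma_n(AD)$ explicitly, where the paper cites Ipsen's bound $\tnorm{\Delta AD(AD)\pinv}$. Both give $\sqrt{n}\gamma_{cmn}\scond(A)$, so that part is fine.

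The step you flagged is a genuine gap, and your fallback does not close it. With a per-column bound $2\gamma_{cmn}$ on $F$, Weyl gives $\scond(\wh{R})\le(1+2\sqrt{n}\gamma_{cmn})\scond(A)/(1-2\sqrt{n}\gamma_{cmn}\scond(A))$. The hypothesis $\sqrt{n}\gamma_{cmn}\scond(A)<1/2$ only keeps this denominator positive. The bound therefore degenerates as $\sqrt{n}\gamma_{cmn}\scond(A)\to 1/2$, so there is no modest constant to push into $p_4$.

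Moreover, the $3$ is load-bearing, as you note yourself. \cite[Cor.~4.2]{deve92} needs $q(M,n)u\scond(\wh{R})<1$. The hypothesis $3q(M,n)u\scond(A)<1$ supplies this only if $\scond(\wh{R})\le 3\scond(A)$. With a larger constant, the Jacobi bound is no longer guaranteed to apply at all.

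The paper's bound is $\scond(\wh{R})\le\frac{1+\wt\gamma_{cmn}\scond(A)}{1-\wt\gamma_{cmn}\scond(A)}\scond(A)$ with $\wt\gamma_{cmn}=\sqrt{n}\gamma_{cmn}$. This corresponds to $\tnorm{F}\le\sqrt{n}\gamma_{cmn}$, i.e.\ a per-column bound of $\gamma_{cmn}$ rather than $2\gamma_{cmn}$. The paper only says ``a similar argument'' here, so this is exactly the point it glosses over; a literal copy of the lemma's entrywise estimate would even introduce a factor $\sqrt{mn}$.

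You can get essentially the sharper column bound from geometry instead of the triangle inequality. Column $j$ of $F$ is the difference of the unit vectors along $a_j$ and $a_j+\Delta a_j$. The angle $\theta_j$ between them satisfies $\sin\theta_j\le\gamma_{cmn}$, because $\tnorm{\Delta a_j}\le\gamma_{cmn}\tnorm{a_j}$ with $\gamma_{cmn}<1$. Hence that column has norm $2\sin(\theta_j/2)=\sin\theta_j/\cos(\theta_j/2)\le\gamma_{cmn}\bigl(1+O(\gamma_{cmn}^2)\bigr)$. This recovers the paper's factor $3$ up to a negligible $1+O(\gamma_{cmn}^2)$ correction. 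If you keep your $2\gamma_{cmn}$ estimate instead, the factor $3$ follows cleanly only under the stronger hypothesis $\sqrt{n}\gamma_{cmn}\scond(A)<1/4$.
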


\begin{proof}
Using the triangle inequality, we have, for all $1\le k \le n$,
\begin{equation}
    \label{eq.QR-factorization-triangular-ineqn}
    \frac{|\sigma_k(A) - \wh{\sigma}_k(\wh{R})|}{\sigma_k(A)} \le 
    \frac{|\sigma_k(A)-\sigma_k(\wh{R})|}{\sigma_k(A)} + 
    \frac{|\sigma_k(\wh{R}) - \wh{\sigma}_k(\wh{R})|}{\sigma_k(\wh{R})} \frac{\sigma_k(\wh{R})}{\sigma_k(A)}.
\end{equation}
By the additive perturbation result for singular values~\cite[Cor.~3.7]{ipse98}, we have
\begin{equation}
    \notag 
    \frac{|\sigma_k(A) - \sigma_k(\wh{R})|}{\sigma_k(A)} \le \tnorm{\Delta A D (AD)\pinv}, 
    \qquad 
    D = \diag(\tnorm{a_j}\inv).
\end{equation}
Using~\cite[Eq.~19.12]{high02-ASNA2}, we can rewrite \eqref{eq.QR-factorization} as
\begin{equation}\notag
  (AD + \Delta A D) D\inv = Q \wh{R}, \qquad \tnorm{\Delta AD} \le \wt\gamma_{cmn} \tnorm{AD},
\end{equation}
where $\wt\gamma_{cmn} = \sqrt{n}\gamma_{cmn}$.
As a result, we obtain
\begin{equation}
  \label{eq.QR-fact-svals-sensitivity}
  \frac{|\sigma_k(A) - \sigma_k(\wh{R})|}{\sigma_k(A)} \le \tnorm{\Delta A D} \tnorm{(AD)\pinv}
  \le 
  \wt\gamma_{cmn} \scond(A).
\end{equation}

The final term in \eqref{eq.QR-factorization-triangular-ineqn} consists of $|\sigma_k(\wh{R})-\wh{\sigma}_k(\wh{R})|/\sigma_k(\wh{R})$, which{, by~\cite[Cor.~4.2]{deve92} and Assumption~\ref{it.jacobi-growth},} is bounded by $q(M,n)u \scond(\wh{R})$ with $q(M,n)$ being a polynomial in the number of applied rotations $M$ and the number of columns $n$, and the ratio $\sigma_k(\wh{R})/\sigma_k(A)$, which can be bounded by $1+ \wt\gamma_{cmn}\scond(A)$ using \eqref{eq.QR-fact-svals-sensitivity}. Using a similar argument in the proof of Lemma~\ref{lem.rel-betw-At-Atcomp}, and assuming $\wt\gamma_{cmn}\scond(A) < 1/2$, we have
\begin{equation}
    \notag 
    \scond(\wh{R}) \le \left( \frac{1+\wt\gamma_{cmn} \scond(A)}{1-\wt\gamma_{cmn}\scond(A)}\right) \scond(A) \le 3\scond(A),
\end{equation}
which enables us to further bound the latter term of \eqref{eq.QR-factorization-triangular-ineqn}. In conclusion, we have 
\begin{equation}
    \notag
    \frac{|\sigma_k(A) - \wh{\sigma}_k(\wh{R})|}{\sigma_k(A)} 
    \le p_4u\scond(A), \quad 
    p_4 = \frac{cmn^{3/2}}{1-cmnu}+\frac{9q(M,n)}{2}.
\end{equation}
\end{proof}

Applying a QR factorization before the one-sided Jacobi algorithm therefore does not significantly increase the relative forward error of the singular values. However, it is not as simple for the mixed-precision preconditioned one-sided Jacobi algorithm considered in this paper (Algorithm~\ref{alg.mp-precond-onesided-Jacobi}), as we now discuss.

\subsection{QR before preconditioning}

The proof of Proposition \ref{lem.Jacobi-on-QR}, in particular \eqref{eq.QR-fact-svals-sensitivity}, suggests that if we perform a QR factorization on $A$ before applying the preconditioner $\Vt$, then the relative forward error in the singular values could scale like $\scond(A)$ instead of $\scond(\At)$. We have confirmed numerically that this can happen in practice. This is problematic because $\scond(A)$ can be significantly larger than $\scond(\At)$ (similarly to the examples provided in \cite[Tab.~1]{htwz25}). A remedy for the resulting inaccuracy is to compute the QR factorization in a higher precision, so that the difference between $\scond(A)$ and $\scond(\At)$ can be offset by a smaller unit roundoff, but this could be prohibitively expensive.

\subsection{QR after preconditioning} 
We suggest performing a QR factorization \emph{after} applying the preconditioner $\Vt$ in high precision, before applying the one-sided Jacobi algorithm.
A natural consideration is that if we use the preconditioner as discussed in section \ref{sec.constr-of-prec}, which achieves%
\begin{equation}\label{eq.htwz-preconditioner-property}
  \off \big( \At \tp \At ) \leq p_5 \ul \fnorm{A\tp A},
\end{equation}
where $\ul$ is the precision used to compute the preconditioner, then does this property remain after a QR factorization? The importance of this is discussed in Remark \ref{rem.offquantity}.
\begin{lemma}\label{lem.QRafterprecond}
Let $\At + \Delta \At = Q_{\At}\wh{R}_{\At}$ be a computed QR factorization of $\At$ at working precision $u$ satisfying \eqref{eq.QR-factorization}.
Then,
\begin{equation}\notag
  \off\big(\wh{R}_{\At}\tp \wh{R}_{\At} \big)
  \le \off\big(\At\tp \At\big) + 3{n}^{3/2} \gamma_{cmn} \fnorm{\At\tp\At}.
\end{equation}
\end{lemma}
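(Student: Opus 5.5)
Since $Q_{\At}$ is orthogonal, $\wh{R}_{\At}\tp\wh{R}_{\At} = (\At+\Delta\At)\tp(\At+\Delta\At)$. The plan is to expand this product and compare it with $\At\tp\At$:
\begin{equation}\notag
  \wh{R}_{\At}\tp\wh{R}_{\At} = \At\tp\At + E, \qquad E = \At\tp\Delta\At + \Delta\At\tp\At + \Delta\At\tp\Delta\At .
\end{equation}
The off-diagonal part $X \mapsto X - \diag(X)$ is linear, and $\off(X)\le\fnorm{X}$. The triangle inequality therefore gives
\begin{equation}\notag
  \off\big(\wh{R}_{\At}\tp\wh{R}_{\At}\big) \le \off\big(\At\tp\At\big) + \fnorm{E}.
\end{equation}
It then suffices to show $\fnorm{E}\le 3n^{3/2}\gamma_{cmn}\fnorm{\At\tp\At}$.

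Next I would bound $\fnorm{E}$ using the columnwise backward error \eqref{eq.QR-factorization}, namely $\tnorm{\Delta\at_j}\le\gamma_{cmn}\tnorm{\at_j}$. Summing the squares over $j$ gives $\fnorm{\Delta\At}\le\gamma_{cmn}\fnorm{\At}$. Then
\begin{equation}\notag
  \fnorm{E} \le 2\fnorm{\At}\fnorm{\Delta\At} + \fnorm{\Delta\At}^2 \le (2\gamma_{cmn}+\gamma_{cmn}^2)\fnorm{\At}^2 \le 3\gamma_{cmn}\fnorm{\At}^2,
\end{equation}
where the last step uses $\gamma_{cmn}<1$. Alternatively, one can work entrywise: $|e_{ij}|\le(2\gamma_{cmn}+\gamma_{cmn}^2)\tnorm{\at_i}\tnorm{\at_j}$ by Cauchy--Schwarz, which gives the same bound.

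The remaining step, which I expect to be the only mildly delicate one, is to convert $\fnorm{\At}^2$ into $\fnorm{\At\tp\At}$. We have $\fnorm{\At}^2=\operatorname{trace}(\At\tp\At)=\sum_i\lambda_i(\At\tp\At)$. By Cauchy--Schwarz on the eigenvalues, this is at most $\sqrt{n}\,\fnorm{\At\tp\At}$. This already yields the factor $3\sqrt{n}\gamma_{cmn}$, which is stronger than the stated $3n^{3/2}\gamma_{cmn}$.

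If the more generous $n^{3/2}$ is intended to absorb a cruder route, that route would be to use the norm-type bound $\tnorm{\Delta\At}\le\sqrt{n}\gamma_{cmn}\tnorm{\At}$ as in the proof of Proposition~\ref{lem.Jacobi-on-QR}, together with $\fnorm{\cdot}\le\sqrt{n}\tnorm{\cdot}$ on $n\times n$ matrices. Either way, the claimed inequality follows. Since $n^{3/2}\ge\sqrt{n}$, I would present the sharper trace argument and remark that it implies the stated bound.
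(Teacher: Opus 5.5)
Your proof is correct, and it follows the paper's outline. Both arguments use orthogonal invariance to replace $\wh{R}_{\At}\tp\wh{R}_{\At}$ by $(\At+\Delta\At)\tp(\At+\Delta\At)$. Both use the columnwise backward error $\tnorm{\Delta\at_j}\le\gamma_{cmn}\tnorm{\at_j}$ with $\gamma_{cmn}<1$. Both finish with the eigenvalue Cauchy--Schwarz estimate $\fnorm{\At}^2\le n^{1/2}\fnorm{\At\tp\At}$.

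The difference is in how the perturbation $E$ is aggregated. The paper bounds entries, $|e_{ij}|\le 3\gamma_{cmn}\tnorm{\at_i}\tnorm{\at_j}$, and then sums them as a plain ($\ell_1$) sum:
\[
\off(\wh{R}_{\At}\tp\wh{R}_{\At})\le\off(\At\tp\At)+3\gamma_{cmn}\textstyle\sum_{i\ne j}\tnorm{\at_i}\tnorm{\at_j}.
\]
It then uses $(\sum_i\tnorm{\at_i})^2\le n\fnorm{\At}^2$. These two loose steps cost a factor of $n$, and that is where the $n^{3/2}$ comes from, rather than from the normwise route you guessed at.

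You instead use the linearity of $X\mapsto X-\diag(X)$ and the triangle inequality in the Frobenius norm, which keeps everything in $\ell_2$. This gives $\fnorm{E}\le 3\gamma_{cmn}\fnorm{\At}^2$ and hence the sharper constant $3n^{1/2}\gamma_{cmn}$, which implies the stated bound. The paper's entrywise bound would give the same result if it used the square root of the sum of squares, $\big(\sum_{i,j}\tnorm{\at_i}^2\tnorm{\at_j}^2\big)^{1/2}=\fnorm{\At}^2$, instead of the plain sum. Your version also states explicitly the passage from entrywise or normwise bounds to a bound on $\off$, which the paper leaves implicit in ``summing over all $i\neq j$''.
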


\begin{proof}
Since $Q_{\At}$ is exactly orthogonal, we have
\begin{equation}\notag
  \off\big(\wh{R}_{\At}\tp \wh{R}_{\At} \big) =
  \off\big( (Q_{\At} \wh{R}_{\At})\tp (Q_{\At} \wh{R}_{\At}) \big) =
  \off\big( (\At + \Delta\At)\tp (\At + \Delta\At)\big).
\end{equation}
After expanding the Gram matrix within the off operator, we need to analyze two matrices, $(\Delta \At) \tp \At$ and $(\Delta \At)\tp (\Delta \At)$. We first have
\begin{equation}\notag
  \big|\big( (\Delta \At)\tp \At \big)_{ij}\big|
  = | (\Delta \at_{i})\tp \at_{j} |
  \le \tnorm{\Delta \at_{i}} \tnorm{\at_{j}}
  \le \gamma_{cmn} \tnorm{\at_{i}} \tnorm{\at_{j}},
\end{equation}
and similarly for $(\Delta \At)\tp (\Delta \At)$,
\begin{equation}\notag
  \big|\big( (\Delta \At)\tp (\Delta \At) \big)_{ij}\big|
  \le \gamma_{cmn}^{2}\tnorm{\at_{i}} \tnorm{\at_{j}}. 
\end{equation}
Using the assumption that $\gamma_{cmn} \le 1$, we finally obtain a componentwise relation
\begin{equation}\notag
  \big| \big( (\At + \Delta\At)\tp (\At + \Delta\At) \big)_{ij} \big|
  \le \big | \big( \At \tp \At \big)_{ij} \big| + 3\gamma_{cmn} \tnorm{\at_{i}} \tnorm{\at_{j}}. 
\end{equation}
Summing over all $i \neq j$, we have
\begin{equation}\label{eq.off-RtR}
  \off(\wh{R}_{\At}\tp \wh{R}_{\At}) \le
  \off ( \At\tp \At )
  + 3\gamma_{cmn} \sum_{i\neq j}^{} \tnorm{\at_{i}} \tnorm{\at_{j}}. 
\end{equation}
We can bound the final summation using the Cauchy--Schwarz inequality as follows.
\begin{equation*}
\sum_{i\neq j}^{} \tnorm{\at_{i}} \tnorm{\at_{j}} \leq \left(\sum_{i=1}^{n} \tnorm{\at_{i}} \right)^2 \leq n \|\At\|_F^2.
\end{equation*}
By another application of the Cauchy--Schwarz inequality, we have
\begin{eqnarray*}
    \fnorm{\At}^{2} &=& \sum_{i=1}^n \sigma_i(\At^T\At) \leq n^{1/2}\left(\sum_{i=1}^n \sigma_i(\At^T\At)^2 \right)^{1/2} = n^{1/2} \|\At^T\At\|_F.
\end{eqnarray*}
Therefore $\sum_{i\neq j}^{} \tnorm{\at_{i}} \tnorm{\at_{j}} \le {n}^{3/2} \fnorm{\At\tp\At}$,
and substituting this back into \eqref{eq.off-RtR} completes the proof.
\end{proof}
\begin{remark}\label{rem.QR}
    Lemma \ref{lem.QRafterprecond} tells us that performing a QR factorization after preconditioning maintains a small $\off$ quantity, and Proposition \ref{lem.Jacobi-on-QR} (with $A$ replaced by $\Atcomp$) guarantees the accuracy of the resulting computed singular values.
\end{remark}

\section{Numerical Experiments} \label{sec.numer-experi}

{Algorithm~\ref{alg.mp3jacobi-qr} provides the practical implementation of Algorithm~\ref{alg.mp-precond-onesided-Jacobi} with a QR factorization.}
\begin{algorithm}[!tbhp]
\caption{Mixed-precision preconditioned one-sided Jacobi algorithm with QR factorization}
\label{alg.mp3jacobi-qr}
\begin{algorithmic}[1]
\Require{A full-rank matrix $A \in \R\mn$ with $m \ge n$, and
  three precisions $\ul$, $u$ and $\uh$ with $0 < \uh < u < \ul$.}
\Ensure{A computed SVD $U\Sigma V\tp$ of $A$.}
\Statex{}
\State{Compute a right singular vector matrix $V_{\ell}$ of $A$
  at precision $\ul$.}
\State{Compute a Householder QR factorization of $V_{\ell}$
  at precision $u$, and take its computed orthogonal factor as $\Vt$.}
\State{%
  Compute the preconditioned matrix $\At$
  by computing the product $A\Vt$ at precision $\uh$,
  which gives $\Athcomp$,
  and then demoting it to precision $u$, which yields $\Atcomp$.}
\If{$m \ge 11n/6$}
\State{Compute a reduced QR factorization
  $\Atcomp = Q_{\At} \wh{R}_{\At}$ at precision $u$.}
\State{%
  Compute an SVD, $\wh{R}_{\At} = U_{J}\Sigma V_J\tp$
  using \inline|DGESVJ| with \inline|JOBA='U'| at precision $u$.}
\State{Construct the left singular vector matrix
  $U = Q_{\At} U_{J}$ at precision $u$.}
\Else{}
\State{%
  Compute an SVD, $\Atcomp = U\Sigma V_J\tp$
  using \inline|DGESVJ| with \inline|JOBA='G'| at precision $u$.}
\EndIf
\State{
  Construct the right singular vector matrix
  $V = \Vt V_J$ at precision $u$.}
\end{algorithmic}
\end{algorithm}

{The first two steps of Algorithm~\ref{alg.mp3jacobi-qr}
implement Algorithm~\ref{alg.orth-method},
with a Householder QR factorization
used for the orthogonalization.
The ratio $11/6$ in the QR crossover criterion
follows the choice made in
the real LAPACK divide-and-conquer SVD drivers
\inline{SGESDD} and \inline{DGESDD}.
We adopt this ratio as a practical heuristic,
and this is not required by our analysis.
Among the three job parameters of
\inline{DGESVJ}, only \inline{JOBA} differs between two scenarios:
\inline|JOBU='U'| and \inline|JOBV='V'| are used in both,
whereas \inline|JOBA='U'| specifies
the upper triangular input $\wh{R}_{\At}$
and \inline|JOBA='G'| specifies the general input $\Atcomp$.}

We present numerical experiments to assess both the {relative forward accuracy} and the speed of Algorithm~\ref{alg.mp3jacobi-qr} using single, double, and quadruple precisions. All experiments were performed in {MATLAB R2025b Update 1 on a MacBook Pro with an M3 Pro chip and 36 GB of RAM}. MATLAB code to produce the results of this section is available at \url{https://github.com/zhengbo0503/Code_twz26a}.

To ensure reproducibility, all test scripts' random number generator were seeded with \verb|rng(1)|.
Quadruple precision was simulated using the Advanpix Multiprecision Computing Toolbox~\cite{mct2023} with \verb|mp.Digits(34)|. We tested the following four algorithms:
\begin{itemize}
    \item MP3JacobiSVD: Algorithm~\ref{alg.mp3jacobi-qr} with
$(\ul,u,\uh)=(\text{single},\text{double},\text{quadruple})$.
    \item \inline{DGESVJ}: LAPACK subroutine for one-sided Jacobi algorithm,
    \item \inline{DGEJSV}: LAPACK subroutine for preconditioned one-sided Jacobi algorithm, and 
    \item MATLAB \texttt{svd}: MATLAB built-in function for computing SVD. 
\end{itemize}
Both \inline{DGESVJ} and \inline{DGEJSV} were invoked in MATLAB through the MEX facility~\cite[sect.~23.7]{hihi16-MG3} using the OpenBLAS LAPACK implementation (Version 0.3.29).
All the reference singular values were computed using MATLAB \texttt{svd} at octuple precision (binary256~\cite{ieee19}), which is simulated by the Advanpix Toolbox with \verb|mp.Digits(71)|.

The command \inline{gallery('randsvd', [m,n], kappa, MODE)} was used to generate random matrices with various sizes, condition numbers \inline{kappa}, and the singular value distributions $\texttt{MODE}\in\{1,2,3,4,5\}$.

{
Our numerical experiments are deliberately not
restricted to matrices satisfying Assumption~\ref{ass.main}.
In particular, they include highly ill-conditioned matrices
and the numerically rank-deficient \inline{whiskycorr} matrix
considered in section~\ref{sec.numerical-experiment.special-test}.
The results show that, on these test matrices,
MP3JacobiSVD continues to achieve high relative accuracy, 
even outside the scope of our analysis.
}

\subsection{Relative forward accuracy} \label{sec.fwdrelacc}
The experiments of this subsection were performed to {assess the relative forward accuracy of the algorithms}. The working precision was double precision with $u\approx 10^{-16}$. 
Since the bound is usually dominated by the scaled condition number term, we neglect the contribution of $p_1 u$. 
To measure the relative forward error for different matrix types, we compute the \textit{maximum relative forward error} for each test matrix, defined as $\max_{1\le k\le n} \ferrk$.
For all experiments in this section, we choose $p_2 = (mn)^{1/2}$. In practice, the exact matrix $\At$ is not available. Instead, we approximate $\scond(\At)$ using $\scond(\Athcomp)$, {where $\Athcomp$ is the product $A\Vt$ computed in quadruple precision. As a result, we use $(mn)^{1/2}u\scond(\Athcomp)$ as a practical reference line.}

\subsubsection{Varying matrix condition number} \label{sec.numer-experi.vary-cond}

\input{figs/varying-kappa}

We generated $A \in \R^{1000\times 800}$, with $\kappa_2(A)$ taking twenty logarithmically spaced values in the range $10^3$ to $10^{15}$ and five different singular value distributions determined by \inline{randsvd}'s MODE. In Figure~\ref{fig.fwderr_diff_cond} we observe that our algorithm was almost always more accurate than the other methods considered, especially for ill-conditioned matrices. When $\kappa_2(A) = 10^{14}$, MP3JacobiSVD can still achieve about $8$ digits of relative accuracy, while the others provide only around $2$ digits.

\subsubsection{Varying matrix size} \label{sec.numer-experi.vary-mat-size}

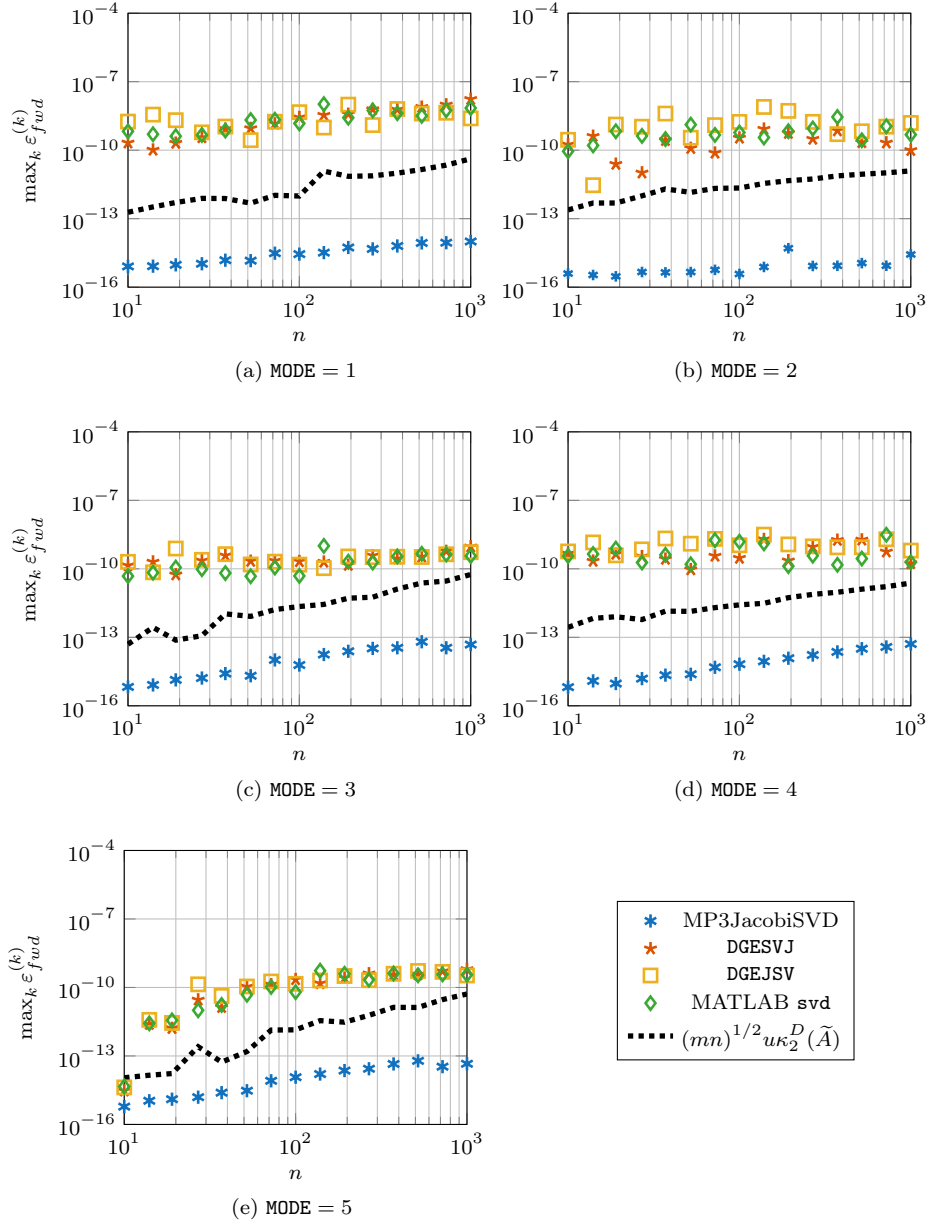
\begin{figure}[!tbhp]
\centering
\footnotesize
\subfloat[$\texttt{MODE} = 1$]{
  \begin{tikzpicture}[trim axis right, trim axis left]
    \begin{axis}[%
xmode=log,
xmin=10,
xmax=1000,
xminorticks=true,
xlabel={$n$},
ymode=log,
ymin=1e-16,
ymax=0.0001,
ytick={ 1e-16,  1e-13,  1e-10,  1e-07, 0.0001},
yminorticks=true,
ylabel={$\max_{k}\ferrk$}
]
\addplot [color=mycolor1, line width=1.0pt, mark size=2.5pt, only marks, mark=asterisk, mark options={solid, mycolor1}, forget plot]
  table[row sep=crcr]{%
10	8.10421790900003e-16\\
14	8.29245242510352e-16\\
19	9.49713116118224e-16\\
27	1.06170166369885e-15\\
37	1.52045057826246e-15\\
52	1.44881344682978e-15\\
72	3.07385770591868e-15\\
100	2.80229209991689e-15\\
139	3.25336793392558e-15\\
193	5.45651448038005e-15\\
268	4.73206049142187e-15\\
373	6.37343841647569e-15\\
518	8.64564060189757e-15\\
720	8.85917856968486e-15\\
1000	1.00388987906343e-14\\
};
\addplot [color=mycolor2, line width=1.0pt, mark size=2.5pt,  only marks, mark=star, mark options={solid, mycolor2}, forget plot]
  table[row sep=crcr]{%
10	2.1087802184469e-10\\
14	1.0276544310303e-10\\
19	2.04989212930905e-10\\
27	3.59945158573457e-10\\
37	8.32488266051102e-10\\
52	9.00888234390018e-10\\
72	1.43096456196056e-09\\
100	2.71519200273342e-09\\
139	3.44374619872361e-09\\
193	3.78440573872287e-09\\
268	6.10839382605527e-09\\
373	5.90773645864234e-09\\
518	7.70334248006251e-09\\
720	9.76822887831941e-09\\
1000	1.7126217155791e-08\\
};
\addplot [color=mycolor3, line width=1.0pt, mark size=2.5pt,  only marks, mark=square, mark options={solid, mycolor3}, forget plot]
  table[row sep=crcr]{%
10	1.7899046689474e-09\\
14	3.62286315215136e-09\\
19	2.05703148299923e-09\\
27	6.03126131812621e-10\\
37	1.08175854468338e-09\\
52	2.75301703666815e-10\\
72	1.76705940864312e-09\\
100	4.61850421489377e-09\\
139	9.80543529094425e-10\\
193	9.91124282744222e-09\\
268	1.25667278697534e-09\\
373	6.35395593413652e-09\\
518	4.00112704150278e-09\\
720	4.35630645233622e-09\\
1000	2.46979775485533e-09\\
};
\addplot [color=mycolor4, line width=1.0pt, mark size=2.5pt,  only marks, mark=diamond, mark options={solid, mycolor4}, forget plot]
  table[row sep=crcr]{%
10	6.70538392464903e-10\\
14	5.004508631309e-10\\
19	4.08832086210082e-10\\
27	4.63159807902025e-10\\
37	7.27316231710983e-10\\
52	2.09138973428016e-09\\
72	2.15041022505941e-09\\
100	1.49159903155702e-09\\
139	1.0397449898252e-08\\
193	2.65049130744879e-09\\
268	5.5122175347056e-09\\
373	4.17261312119662e-09\\
518	3.19408900825703e-09\\
720	5.6123479069002e-09\\
1000	7.01725492006046e-09\\
};
\addplot [color=black, dotted, line width=2.0pt,  forget plot]
  table[row sep=crcr]{%
10	1.90776266673465e-13\\
14	3.276567579419e-13\\
19	5.10312857414551e-13\\
27	7.74783998216569e-13\\
37	7.63531607899776e-13\\
52	4.79596387883633e-13\\
72	1.05236970693594e-12\\
100	9.83424810833748e-13\\
139	1.20742454325897e-11\\
193	7.15682473079675e-12\\
268	7.52674310011829e-12\\
373	9.86516343347916e-12\\
518	1.42692546645628e-11\\
720	2.22117560160248e-11\\
1000	4.19822126400167e-11\\
};
\end{axis}
  \end{tikzpicture}}
\qquad\qquad
\subfloat[$\texttt{MODE} = 2$]{
  \begin{tikzpicture}[trim axis right, trim axis left]
    \begin{axis}[%
xmode=log,
xmin=10,
xmax=1000,
xminorticks=true,
xlabel={$n$},
ymode=log,
ymin=1e-16,
ymax=0.0001,
ytick={ 1e-16,  1e-13,  1e-10,  1e-07, 0.0001},
yminorticks=true
]
\addplot [color=mycolor1, line width=1.0pt, mark size=2.5pt, only marks, mark=asterisk, mark options={solid, mycolor1}, forget plot]
  table[row sep=crcr]{%
10	3.94734435385903e-16\\
14	3.41178200642396e-16\\
19	2.975880360118e-16\\
27	4.61297452210624e-16\\
37	4.43670102505758e-16\\
52	4.58916225743097e-16\\
72	5.73046543490367e-16\\
100	3.78436226570507e-16\\
139	7.68237662298773e-16\\
193	5.0827169961363e-15\\
268	8.4913936572839e-16\\
373	8.65460327920015e-16\\
518	1.13295103667284e-15\\
720	8.69419304181823e-16\\
1000	2.70222102019757e-15\\
};
\addplot [color=mycolor2, line width=1.0pt, mark size=2.5pt,  only marks, mark=star, mark options={solid, mycolor2}, forget plot]
  table[row sep=crcr]{%
10	1.75171809063726e-10\\
14	4.12204788745838e-10\\
19	2.45776193820203e-11\\
27	1.02621184857719e-11\\
37	2.70162627135882e-10\\
52	1.21374324664856e-10\\
72	7.57025934545771e-11\\
100	3.45330431966288e-10\\
139	8.44632858977999e-10\\
193	5.54872304176122e-10\\
268	3.08242780328262e-10\\
373	7.19862618638252e-10\\
518	2.19482652161001e-10\\
720	2.11615589239271e-10\\
1000	9.92531271955913e-11\\
};
\addplot [color=mycolor3, line width=1.0pt, mark size=2.5pt,  only marks, mark=square, mark options={solid, mycolor3}, forget plot]
  table[row sep=crcr]{%
10	2.85551617209681e-10\\
14	2.9212813550469e-12\\
19	1.32595752337795e-09\\
27	1.06752103386482e-09\\
37	4.02312556623731e-09\\
52	3.51171385608271e-10\\
72	1.24918534588035e-09\\
100	1.73243158545368e-09\\
139	7.8332183626273e-09\\
193	5.27197454029507e-09\\
268	1.74255924292421e-09\\
373	5.04161377812237e-10\\
518	6.73959999694595e-10\\
720	1.07112349967005e-09\\
1000	1.55695052382211e-09\\
};
\addplot [color=mycolor4, line width=1.0pt, mark size=2.5pt,  only marks, mark=diamond, mark options={solid, mycolor4}, forget plot]
  table[row sep=crcr]{%
10	8.97311278081926e-11\\
14	1.6173549207025e-10\\
19	6.70465885994911e-10\\
27	4.22448467927341e-10\\
37	3.06544677430963e-10\\
52	1.31315091534434e-09\\
72	4.65777941236119e-10\\
100	5.95880957797649e-10\\
139	3.59652446586584e-10\\
193	6.78391122927401e-10\\
268	8.90738240891813e-10\\
373	2.84487189150343e-09\\
518	2.71720927628079e-10\\
720	1.11384580502747e-09\\
1000	4.78695657237835e-10\\
};
\addplot [color=black, dotted, line width=2.0pt, forget plot]
  table[row sep=crcr]{%
10	2.42257211168608e-13\\
14	4.84861872959255e-13\\
19	4.84738119489742e-13\\
27	1.0010583401435e-12\\
37	2.00078434179987e-12\\
52	1.41813036649705e-12\\
72	2.12142302828513e-12\\
100	2.19997769166893e-12\\
139	3.49776817314093e-12\\
193	4.62556217340291e-12\\
268	5.46416996751799e-12\\
373	7.48250935353498e-12\\
518	8.86959516525968e-12\\
720	1.01655177897946e-11\\
1000	1.24097793253015e-11\\
};
\end{axis}
  \end{tikzpicture}}

\subfloat[$\texttt{MODE} = 3$]{
  \begin{tikzpicture}[trim axis right, trim axis left]
    \begin{axis}[%
xmode=log,
xmin=10,
xmax=1000,
xminorticks=true,
xlabel={$n$},
ymode=log,
ymin=1e-16,
ymax=0.0001,
ytick={ 1e-16,  1e-13,  1e-10,  1e-07, 0.0001},
yminorticks=true,
ylabel={$\max_{k}\ferrk$},
]
\addplot [color=mycolor1, line width=1.0pt, mark size=2.5pt,  only marks, mark=asterisk, mark options={solid, mycolor1}, forget plot]
  table[row sep=crcr]{%
10	6.70573890332404e-16\\
14	8.19993750703761e-16\\
19	1.36704675598269e-15\\
27	1.64599170378877e-15\\
37	2.57956325943601e-15\\
52	2.07141336959538e-15\\
72	1.02214808034635e-14\\
100	6.11820416497971e-15\\
139	1.76311372857429e-14\\
193	2.44610176028744e-14\\
268	3.26984492596702e-14\\
373	3.47686725110715e-14\\
518	6.39315739921635e-14\\
720	3.47798410455825e-14\\
1000	4.80454068393163e-14\\
};
\addplot [color=mycolor2, line width=1.0pt, mark size=2.5pt,  only marks, mark=star, mark options={solid, mycolor2}, forget plot]
  table[row sep=crcr]{%
10	1.3981832594683e-10\\
14	2.01521901775546e-10\\
19	5.56416717716705e-11\\
27	2.28348013299828e-10\\
37	3.81959480067159e-10\\
52	2.16722442929245e-10\\
72	2.25590782387908e-10\\
100	2.10206029064428e-10\\
139	1.99234928492764e-10\\
193	1.45484797198797e-10\\
268	3.80131612049429e-10\\
373	2.78595370163358e-10\\
518	3.35529349451633e-10\\
720	5.85875262515911e-10\\
1000	9.32489637460913e-10\\
};
\addplot [color=mycolor3, line width=1.0pt, mark size=2.5pt,  only marks, mark=square, mark options={solid, mycolor3}, forget plot]
  table[row sep=crcr]{%
10	2.02227283370156e-10\\
14	7.02140932672375e-11\\
19	7.7578039167767e-10\\
27	2.48560162560446e-10\\
37	4.38680743594455e-10\\
52	1.55564598899912e-10\\
72	2.06384385172166e-10\\
100	1.52423489706448e-10\\
139	1.07060462091083e-10\\
193	3.48701798001677e-10\\
268	3.2692353147982e-10\\
373	3.3894118347651e-10\\
518	3.30933032123764e-10\\
720	4.33856695764671e-10\\
1000	5.35539820521732e-10\\
};
\addplot [color=mycolor4, line width=1.0pt, mark size=2.5pt,  only marks, mark=diamond, mark options={solid, mycolor4}, forget plot]
  table[row sep=crcr]{%
10	4.81056681415356e-11\\
14	6.80008888199741e-11\\
19	1.13621076982454e-10\\
27	9.60720726399875e-11\\
37	6.50381323881963e-11\\
52	4.73321656995355e-11\\
72	1.14110705186488e-10\\
100	4.8889266003849e-11\\
139	1.01330590899195e-09\\
193	2.05792651000116e-10\\
268	1.87624389747198e-10\\
373	3.46876142593626e-10\\
518	4.56182875511113e-10\\
720	4.25440521286717e-10\\
1000	3.65359671541873e-10\\
};
\addplot [color=black, dotted, line width=2.0pt, forget plot]
  table[row sep=crcr]{%
10	4.97847124252892e-14\\
14	2.72192487609967e-13\\
19	7.51513390742356e-14\\
27	1.17684652933575e-13\\
37	1.06293653057499e-12\\
52	8.26284427553637e-13\\
72	1.64267399094145e-12\\
100	2.23656272474097e-12\\
139	2.76615355503289e-12\\
193	5.16671531306001e-12\\
268	5.72961091006867e-12\\
373	1.29973870946972e-11\\
518	2.38511960169565e-11\\
720	2.92777954085226e-11\\
1000	5.64607339586827e-11\\
};
\end{axis}
  \end{tikzpicture}}
\qquad\qquad
\subfloat[$\texttt{MODE} = 4$]{
  \begin{tikzpicture}[trim axis right, trim axis left]
    \begin{axis}[%
xmode=log,
xmin=10,
xmax=1000,
xminorticks=true,
xlabel={$n$},
ymode=log,
ymin=1e-16,
ymax=0.0001,
ytick={ 1e-16,  1e-13,  1e-10,  1e-07, 0.0001},
yminorticks=true,
]
\addplot [color=mycolor1, line width=1.0pt, mark size=2.5pt,  only marks, mark=asterisk, mark options={solid, mycolor1}, forget plot]
  table[row sep=crcr]{%
10	6.56314371799685e-16\\
14	1.2329711976341e-15\\
19	9.44487126114361e-16\\
27	1.54739310772636e-15\\
37	2.20913648778757e-15\\
52	2.40575571824835e-15\\
72	4.86233724009213e-15\\
100	6.65206173211406e-15\\
139	8.88349689229487e-15\\
193	1.21290172860247e-14\\
268	1.67923668937796e-14\\
373	2.29681400036926e-14\\
518	3.17164523156901e-14\\
720	3.78085247508975e-14\\
1000	5.09711215965183e-14\\
};
\addplot [color=mycolor2, line width=1.0pt, mark size=2.5pt,  only marks, mark=star, mark options={solid, mycolor2}, forget plot]
  table[row sep=crcr]{%
10	4.59595683547546e-10\\
14	2.19614113885124e-10\\
19	4.44704879062555e-10\\
27	3.459668683205e-10\\
37	2.68476685560778e-10\\
52	9.27225096599596e-11\\
72	3.6652746430453e-10\\
100	2.96625637398607e-10\\
139	1.95170746891949e-09\\
193	2.28431752108675e-10\\
268	9.09663778970172e-10\\
373	1.81039874384922e-09\\
518	1.86350167460764e-09\\
720	5.61841024491945e-10\\
1000	1.71585478861396e-10\\
};
\addplot [color=mycolor3, line width=1.0pt, mark size=2.5pt,  only marks, mark=square, mark options={solid, mycolor3}, forget plot]
  table[row sep=crcr]{%
10	5.77999143189049e-10\\
14	1.43566506521799e-09\\
19	3.89273521271824e-10\\
27	7.12550318596822e-10\\
37	2.13460440938255e-09\\
52	1.26001430668528e-09\\
72	2.04730863852925e-09\\
100	1.0777931768969e-09\\
139	3.155874373967e-09\\
193	1.17395519721995e-09\\
268	9.64361533805373e-10\\
373	8.76095765555229e-10\\
518	8.79181233635951e-10\\
720	1.97199384020411e-09\\
1000	6.29341077248725e-10\\
};
\addplot [color=mycolor4, line width=1.0pt, mark size=2.5pt,  only marks, mark=diamond, mark options={solid, mycolor4}, forget plot]
  table[row sep=crcr]{%
10	3.64098525238241e-10\\
14	4.43317003119976e-10\\
19	7.71398040484352e-10\\
27	1.89164722637693e-10\\
37	4.11877542860131e-10\\
52	1.58433075425038e-10\\
72	1.78687813072885e-09\\
100	1.53931196934677e-09\\
139	1.30224319893603e-09\\
193	1.31380397590025e-10\\
268	3.76381539947161e-10\\
373	1.49699178249403e-10\\
518	2.81422144927098e-10\\
720	3.10396423890027e-09\\
1000	1.93810001969483e-10\\
};
\addplot [color=black, dotted, line width=2.0pt, forget plot]
  table[row sep=crcr]{%
10	2.73183137757246e-13\\
14	6.76906645808488e-13\\
19	7.935841151402e-13\\
27	5.99281296702644e-13\\
37	1.3964169987576e-12\\
52	1.37219617517054e-12\\
72	1.99774188449247e-12\\
100	2.64427538408222e-12\\
139	3.08298932584988e-12\\
193	5.44726848184034e-12\\
268	7.60839025299887e-12\\
373	9.41206972877453e-12\\
518	1.28622850396065e-11\\
720	1.65765152149479e-11\\
1000	2.36981498110001e-11\\
};
\end{axis}
  \end{tikzpicture}}

\subfloat[$\texttt{MODE} = 5$]{
  \begin{tikzpicture}[trim axis right, trim axis left]
    \begin{axis}[%
xmode=log,
xmin=10,
xmax=1000,
xminorticks=true,
xlabel={$n$},
ymode=log,
ymin=1e-16,
ymax=0.0001,
ytick={ 1e-16,  1e-13,  1e-10,  1e-07, 0.0001},
yminorticks=true,
ylabel={$\max_{k}\ferrk$}
]
\addplot [color=mycolor1, line width=1.0pt, mark size=2.5pt,  only marks, mark=asterisk, mark options={solid, mycolor1}, forget plot]
  table[row sep=crcr]{%
10	6.02636529223394e-16\\
14	1.09059358637953e-15\\
19	1.27479952363579e-15\\
27	1.56402834104994e-15\\
37	2.4645391389126e-15\\
52	3.00460730897371e-15\\
72	8.16370565439426e-15\\
100	1.17331858512937e-14\\
139	1.61188178641778e-14\\
193	2.28874135724602e-14\\
268	2.74883366155475e-14\\
373	4.46222064535664e-14\\
518	6.09642456272487e-14\\
720	3.39377387533474e-14\\
1000	4.55077944359613e-14\\
};
\addplot [color=mycolor2, line width=1.0pt, mark size=2.5pt,  only marks, mark=star, mark options={solid, mycolor2}, forget plot]
  table[row sep=crcr]{%
10	2.98251272105922e-15\\
14	2.46805413186255e-12\\
19	1.62997086020947e-12\\
27	2.87158959584625e-11\\
37	1.35791219348193e-11\\
52	1.07488940267249e-10\\
72	1.32560419218881e-10\\
100	2.24047551245281e-10\\
139	1.50591038491036e-10\\
193	2.59211096400085e-10\\
268	4.01039686247144e-10\\
373	3.62692600111058e-10\\
518	3.63319122356886e-10\\
720	5.05821897695285e-10\\
1000	6.22709794846785e-10\\
};
\addplot [color=mycolor3, line width=1.0pt, mark size=2.5pt,  only marks, mark=square, mark options={solid, mycolor3}, forget plot]
  table[row sep=crcr]{%
10	4.20340374329945e-15\\
14	3.80059124099402e-12\\
19	2.793841283049e-12\\
27	1.37204673885001e-10\\
37	4.25259497895152e-11\\
52	1.10345987189287e-10\\
72	1.83086854765145e-10\\
100	1.44556006093101e-10\\
139	2.02473787110473e-10\\
193	3.17228746692137e-10\\
268	2.07727916256771e-10\\
373	3.98760574769805e-10\\
518	5.27845795827792e-10\\
720	4.82775077953914e-10\\
1000	3.34220952888495e-10\\
};
\addplot [color=mycolor4, line width=1.0pt, mark size=2.5pt,  only marks, mark=diamond, mark options={solid, mycolor4}, forget plot]
  table[row sep=crcr]{%
10	4.59627329990532e-15\\
14	2.68058743644754e-12\\
19	3.63236221214081e-12\\
27	9.9970570229917e-12\\
37	1.71629056089885e-11\\
52	4.78490112649139e-11\\
72	1.05533632852233e-10\\
100	6.22390185732577e-11\\
139	5.41735645796338e-10\\
193	3.97591865697236e-10\\
268	2.20380542816885e-10\\
373	4.07704790956311e-10\\
518	3.4096140396769e-10\\
720	3.74462971203914e-10\\
1000	3.61075431180688e-10\\
};
\addplot [color=black, dotted, line width=2.0pt, forget plot]
  table[row sep=crcr]{%
10	1.11038561496304e-14\\
14	1.4315095222124e-14\\
19	1.69294547097749e-14\\
27	2.53449555834905e-13\\
37	5.37421384779909e-14\\
52	1.56229731279242e-13\\
72	1.36113746391512e-12\\
100	1.39471940260537e-12\\
139	3.52700897676148e-12\\
193	3.02087609384579e-12\\
268	6.03705244047972e-12\\
373	1.34443053379004e-11\\
518	1.33060024718733e-11\\
720	2.93632165495658e-11\\
1000	5.17031262796086e-11\\
};
\end{axis}
  \end{tikzpicture}}
\qquad\qquad
\subfloat{
  \begin{tikzpicture}[trim axis right, trim axis left]
    \begin{axis}[
      hide axis,
      xmin=0,
      xmax=1,
      ymin=0,
      ymax=1,
      /tikz/every even column/.append style={column sep=10pt},
      cells={line width=0.9pt, mark size=2.6},
      legend style={at={(0.5,0.75)}, anchor=center, legend columns=1}]
      \addplot[color=mycolor1, line width=1.0pt, mark size=2.5pt, only marks, mark=asterisk, mark options={solid, mycolor1}](-1,0);
      \addlegendentry{MP3JacobiSVD}
      \addplot[color=mycolor2, line width=1.0pt, mark size=2.5pt, only marks, mark=star, mark options={solid, mycolor2}](-1,0);
      \addlegendentry{\inline{DGESVJ}}
      \addplot[color=mycolor3, line width=1.0pt, mark size=2.5pt, only marks, mark=square, mark options={solid, mycolor3}](-1,0);
      \addlegendentry{\inline{DGEJSV}}
      \addplot[color=mycolor4, line width=1.0pt,  mark size=2.5pt, only marks, mark=diamond, mark options={solid, mycolor4}](-1,0);
      \addlegendentry{MATLAB \texttt{svd}}
      \addplot [color=black, dotted, line width=2.0pt](-1,0);
      \addlegendentry{{Reference line}}
    \end{axis}
  \end{tikzpicture}
  }
\caption{Maximum relative forward error $\max_k \ferrk$ for applying MP3JacobiSVD, \inline{DGESVJ}, \inline{DGEJSV} and MATLAB \texttt{svd} to matrices with a fixed number of rows $m = 10^3$, a varying number of columns $n$ from $10$ to $10^3$, a fixed condition number $10^{8}$ and \texttt{MODE} as indicated by the subcaptions.}
\label{fig.fwderr_diff_size}
\end{figure}



In this experiment, we fixed the number of {rows, $m$,} of $A$ to $10^3$ and varied the number of {columns, $n$,} over fifteen logarithmically spaced values between $10$ and $10^3$. We set $\kappa_2(A) = 10^8$.
In Figure~\ref{fig.fwderr_diff_size} we observe that {the errors of} MP3JacobiSVD {lay below the reference line for all test matrices} and it always achieved a smaller value of $\max_k\ferrk$ than the other methods considered. In addition, the rate of growth of $\max_k\ferrk$, with respect to $n$, closely matched our proposed bound $(mn)^{1/2}u\scond(\At)$.

\subsubsection{Special test matrices}
\label{sec.numerical-experiment.special-test}
We executed all four algorithms on the following four test matrices from the Anymatrix Toolbox (Version 5.4.4)~\cite{himi21} to see how $\ferrk$ behaves for each of the individual singular values.
\begin{enumerate}[label=(\arabic*)]
    \item \inline{anymatrix('regtools/blur', 10, 5, 1.4)}: a sparse matrix $A \in \R^{100\times100}$ with $\kappa_2(A) = 10^6$ from the MATLAB package {regtools}~\cite{hans07}.
    \item \inline{anymatrix('gallery/kahan', 50, 1e-2)}: an upper triangular Kahan matrix $A \in \R^{50 \times 50}$ with $\kappa_2(A) \approx 1.6\times 10^{15}$. 
    \item \inline{anymatrix('nessie/whiskycorr')}: a correlation matrix $A \in \R^{86\times 86}$ with $\kappa_2(A) \approx 3 \times 10^{18}$ associated with malt whisky tasting from the NESSIE datasets~\cite{tahi10}.
    \item Gram matrix of \inline{anymatrix('gallery/lauchli', 500, 1e-3)}: the command \inline{anymatrix('gallery/lauchli', 500, 1e-3)} gives a Lauchli matrix $B \in \R^{501\times500}$. The Gram matrix $A = B\tp B$ has condition number $5\times 10^{8}$.
\end{enumerate}

\input{figs/special-test-matrix}

Figure~\ref{fig.special-matrix} shows the typical behavior: for singular values that are about the same size as $\tnorm{A}$, all the algorithms can compute them with high relative accuracy. However, as the singular values get smaller, \inline{DGESVJ}, \inline{DGEJSV}, and MATLAB \texttt{svd} gradually lose their relative accuracy while  our algorithm, MP3JacobiSVD, maintains high relative accuracy even for the smallest singular value.
Figure~\ref{subfig.gallery/kahan} demonstrates
an interesting behavior in which only MATLAB \texttt{svd} lost its relative accuracy. This is because the one-sided Jacobi algorithm can compute singular values with high relative accuracy, provided that the input matrix is well-conditioned after scaling, regardless of the side on which the scaling is applied; see~\cite[sect.~2]{domo04},~\cite[sect.~3]{dges99}, and~\cite{math95}. Here, although $\k(A)$ and $\scond(A)$ are both approximately $10^{15}$, $\scond(A\tp) \approx 10^2$.

\subsection{Timing}
Since all components of MP3JacobiSVD {use optimized kernels provided by MATLAB and LAPACK, together with Advanpix's compiled MEX backend for quadruple precision matrix--matrix multiplication}, we can perform meaningful runtime tests using MATLAB.
We used the \texttt{tic} and \texttt{toc} functions to measure the time elapsed and took an average of two runs. 
The test matrices are generated by \inline{gallery('randsvd')} with a fixed $m = 3000$, $10$ logarithmically spaced values of $n$ from $100$ to $3000$, a fixed  condition number $10^8$ and a set of geometrically distributed singular values. 

\begin{figure}[!tbhp]
\centering
\footnotesize
\subfloat[Total time elapsed\label{subfig.total-time-used}]{
  \begin{tikzpicture}[trim axis right, trim axis left]
    \begin{axis}[%
xmode=log,
xmin=100,
      xmax=3000,
      xtick={100, 250, 500, 1250, 3000},
      xticklabels={$100$, $250$, $500$, $1250$, $3000$},
xminorticks=true,
xlabel={$n$},
ymode=log,
ymin=0.001,
ymax=1000,
yminorticks=true,
ylabel={Runtime (sec)},
]
\addplot [color=mycolor1, line width=1.0pt, mark size=2.5pt,  only marks, mark=asterisk, mark options={solid, mycolor1}, forget plot]
  table[row sep=crcr]{%
100	0.3216633125\\
146	0.520952583333333\\
213	0.996843729166667\\
311	2.39825895833333\\
453	4.80566972916667\\
662	9.119651625\\
965	19.5385897083333\\
1409	44.8563481458333\\
2056	95.9151174375\\
3000	213.3430663125\\
};
\addplot [color=mycolor2, line width=1.0pt, mark size=2.5pt,  only marks, mark=star, mark options={solid, mycolor2}, forget plot]
  table[row sep=crcr]{%
100	0.108093854166667\\
146	0.1748500625\\
213	0.389666020833333\\
311	0.887737895833333\\
453	1.84248545833333\\
662	4.02932616666667\\
965	9.43526595833333\\
1409	22.8807304791667\\
2056	54.5373303333333\\
3000	135.951207416667\\
};
\addplot [color=mycolor3, line width=1.0pt, mark size=2.5pt,  only marks, mark=square, mark options={solid, mycolor3}, forget plot]
  table[row sep=crcr]{%
100	0.0170514166666667\\
146	0.0298368958333333\\
213	0.0575249791666667\\
311	0.141336541666667\\
453	0.221822875\\
662	0.525711979166667\\
965	1.36610960416667\\
1409	3.75051616666667\\
2056	12.6846833958333\\
3000	36.0156751041667\\
};
\addplot [color=mycolor4, line width=1.0pt, mark size=2.5pt,  only marks, mark=diamond, mark options={solid, mycolor4}, forget plot]
  table[row sep=crcr]{%
100	0.004058\\
146	0.00694827083333333\\
213	0.0147837708333333\\
311	0.0262632916666667\\
453	0.0638116875\\
662	0.0632032708333333\\
965	0.1156431875\\
1409	0.197114083333333\\
2056	0.365281645833333\\
3000	0.668854979166667\\
};
\end{axis}
  \end{tikzpicture}}
\qquad\qquad\qquad
\subfloat[Relative runtime of the preconditioning process \label{subfig.relative-timing-for-preconditioning}]{
  \begin{tikzpicture}[trim axis right, trim axis left]
    \begin{axis}[%
xmode=log,
xmin=100,
      xmax=3000,
      xtick={100, 250, 500, 1250, 3000},
      xticklabels={$100$, $250$, $500$, $1250$, $3000$},
xminorticks=true,
xlabel={$n$},
ymin=750,
      ymax=1400,
      ytick={800, 1050, 1300},
      yticklabels={$800$, $1050$, $1300$},
yminorticks=true,
ylabel={Relative runtime},
]
\addplot [color=black, line width=1.0pt, mark size=2.5pt, only marks, mark=triangle, mark options={solid, rotate=90, black}, forget plot]
  table[row sep=crcr]{%
100	842.236089318045\\
146	902.00003975985\\
213	957.846280195353\\
311	1108.44893871387\\
453	1230.61365418748\\
662	1204.17558231226\\
965	1239.53400008039\\
1409	1148.2578397926\\
2056	1111.25125838476\\
3000	1061.11855657474\\
};
\end{axis}
  \end{tikzpicture}}

\subfloat[Runtime for performing the one-sided Jacobi algorithm step\label{subfig.time-used-by-Jacobi}]{
  \begin{tikzpicture}[trim axis right, trim axis left]
    \begin{axis}[%
xmode=log,
xmin=100,
xmax=3000,
xminorticks=true,
      xlabel={$n$},
      xtick={100, 250, 500, 1250, 3000},
      xticklabels={$100$, $250$, $500$, $1250$, $3000$},
ymode=log,
ymin=0.01,
ymax=1000,
yminorticks=true,
ylabel={Runtime (sec)},
]
\addplot [color=mycolor1, line width=1.0pt, mark size=2.5pt, only marks,  mark=asterisk, mark options={solid, mycolor1}, forget plot]
  table[row sep=crcr]{%
100	0.0107208333333333\\
146	0.0205966875\\
213	0.042391375\\
311	0.101529479166667\\
453	0.231776083333333\\
662	0.6427056875\\
965	1.83845754166667\\
1409	5.64583408333333\\
2056	16.6861182708333\\
3000	40.7662731666667\\
};
\addplot [color=mycolor2, line width=1.0pt, mark size=2.5pt, only marks, mark=star, mark options={solid, mycolor2}, forget plot]
  table[row sep=crcr]{%
100	0.108093854166667\\
146	0.1748500625\\
213	0.389666020833333\\
311	0.887737895833333\\
453	1.84248545833333\\
662	4.02932616666667\\
965	9.43526595833333\\
1409	22.8807304791667\\
2056	54.5373303333333\\
3000	135.951207416667\\
};
\end{axis}
  \end{tikzpicture}}
\qquad\qquad\qquad
\subfloat[Total time elapsed with a potential version of MP3JacobiSVD\label{subfig.potential-version-Jacobi}]{
  \begin{tikzpicture}[trim axis right, trim axis left]
    \begin{axis}[%
xmode=log,
xmin=100,
xmax=3000,
  xtick={100, 250, 500, 1250, 3000},
      xticklabels={$100$, $250$, $500$, $1250$, $3000$},
xminorticks=true,
xlabel={$n$},
ymode=log,
ymin=0.01,
ymax=1000,
yminorticks=true,
ylabel={Runtime (sec)}
]
\addplot [color=black, line width=1.0pt, mark size=2.5pt, only marks, mark=o , forget plot]
  table[row sep=crcr]{%
100	0.0654372291666667\\
146	0.100721270833333\\
213	0.180799583333333\\
311	0.394891104166666\\
453	0.811093916666666\\
662	1.53488552083333\\
965	3.59178472916667\\
1409	9.66139691666666\\
2056	25.2476154375\\
3000	59.703179125\\
};
\addplot [color=mycolor2, line width=1.0pt, mark size=2.5pt, only marks, mark=star, mark options={solid, mycolor2}, forget plot]
  table[row sep=crcr]{%
100	0.108093854166667\\
146	0.1748500625\\
213	0.389666020833333\\
311	0.887737895833333\\
453	1.84248545833333\\
662	4.02932616666667\\
965	9.43526595833333\\
1409	22.8807304791667\\
2056	54.5373303333333\\
3000	135.951207416667\\
};
\addplot [color=mycolor3, line width=1.0pt, mark size=2.5pt, only marks, mark=square, mark options={solid, mycolor3}, forget plot]
  table[row sep=crcr]{%
100	0.0170514166666667\\
146	0.0298368958333333\\
213	0.0575249791666667\\
311	0.141336541666667\\
453	0.221822875\\
662	0.525711979166667\\
965	1.36610960416667\\
1409	3.75051616666667\\
2056	12.6846833958333\\
3000	36.0156751041667\\
};
\end{axis}
  \end{tikzpicture}}

\subfloat{
  \begin{tikzpicture}[trim axis right, trim axis left]
    \begin{axis}[
      hide axis,
      xmin=0,
      xmax=1,
      ymin=0,
      ymax=1,
      /tikz/every even column/.append style={column sep=0.3cm},
      cells={line width=0.9pt, mark size=2.6},
      legend style={at={(0.5,0.75)}, anchor=center, legend columns=5}]
      \addplot[color=mycolor1, line width=1.0pt, mark size=2.5pt, only marks, mark=asterisk, mark options={solid, mycolor1}](-1,0);
      \addlegendentry{MP3JacobiSVD}
      \addplot[color=mycolor2, line width=1.0pt, mark size=2.5pt, only marks, mark=star, mark options={solid, mycolor2}](-1,0);
      \addlegendentry{\inline{DGESVJ}}
      \addplot[color=mycolor3, line width=1.0pt, mark size=2.5pt, only marks, mark=square, mark options={solid, mycolor3}](-1,0);
      \addlegendentry{\inline{DGEJSV}}
      \addplot[color=mycolor4, line width=1.0pt,  mark size=2.5pt, only marks, mark=diamond, mark options={solid, mycolor4}](-1,0);
      \addlegendentry{MATLAB \texttt{svd}}
      \addplot [color=black, line width=1.0pt, mark size=2.5pt, only marks, mark=o](-1,0);
      \addlegendentry{MP3JacobiSVD (Potential)}
    \end{axis}
  \end{tikzpicture}
}

\caption{Timing tests for MP3JacobiSVD, \inline{DGESVJ}, \inline{DGEJSV}, MATLAB \texttt{svd}, and a theoretical variant of MP3JacobiSVD in which the preconditioner applied in quadruple precision is only $100$ times slower than in double precision. Figure~\ref{subfig.relative-timing-for-preconditioning} (top-right) shows the time elapsed applying the preconditioner at quadruple precision divided by that at double precision. The test matrices have a fixed number of rows $m = 3000$, a varying number of columns $n$ from $100$ to $3000$, a fixed condition number $10^8$, and  geometrically distributed singular values.}
\label{fig.compare-timing}
\end{figure}
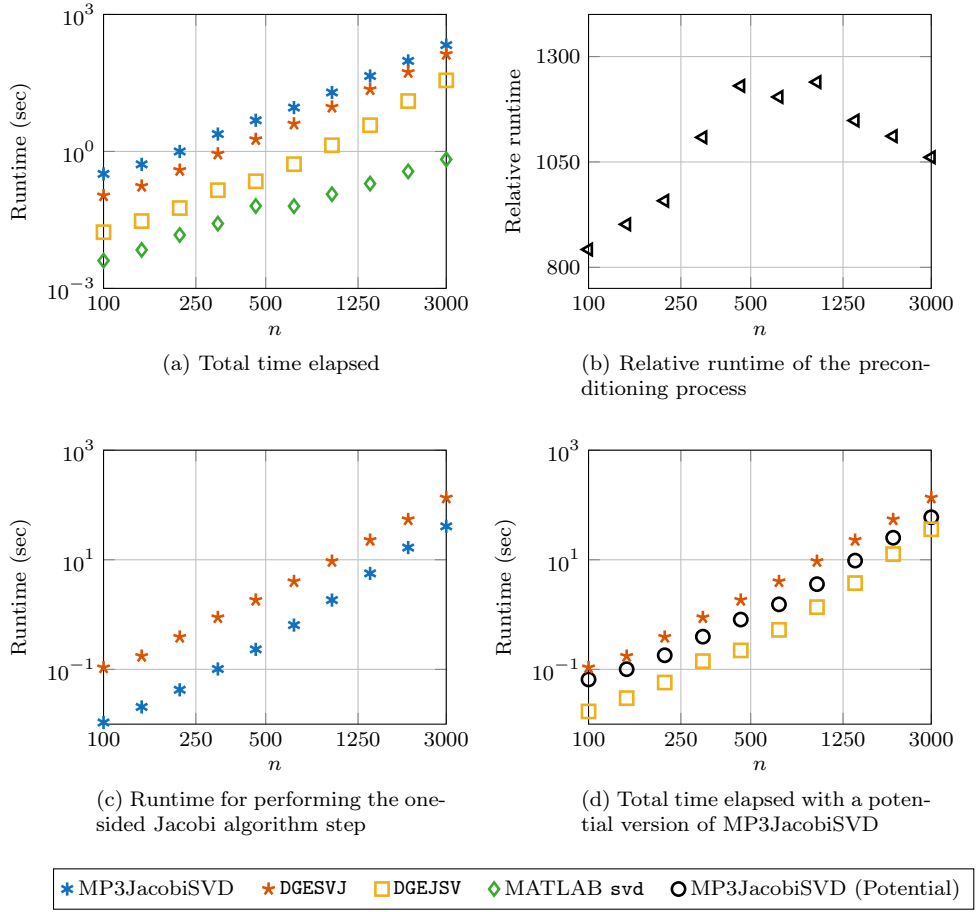


Figure~\ref{subfig.total-time-used} compares the runtimes of the four algorithms and shows that MP3JacobiSVD was the slowest. Nevertheless, we can use MATLAB profiler to assess the time spent on each component of MP3JacobiSVD~\cite[sect.~16.4]{hihi16-MG3}.
The result shows that approximately $66.1\%$ of the time is spent on the matrix--matrix product at quadruple precision, whereas only $23.8\%$ is attributed to the one-sided Jacobi algorithm (\inline{DGESVJ}). The inefficiency of MP3JacobiSVD is expected because quadruple precision was simulated, unlike double- or single-precision arithmetic, which is performed natively on the hardware~\cite{knn23}.

As shown in Figure~\ref{subfig.relative-timing-for-preconditioning}, applying the preconditioner at quadruple precision is over $800$ times slower (but can be up to about $1300$ times slower) than at double precision.
Optimized quadruple precision implementations exist
that are less than 50 times slower,
but these implementations are not available
in MATLAB~\cite[sect.~3.1]{divi06},~\cite{bail05}, and~\cite[sect.~2]{babo15}.
On the other hand, Figure~\ref{subfig.time-used-by-Jacobi} shows that the preconditioning step improves the convergence of the Jacobi algorithm, as {the time used by the SVD stage of MP3JacobiSVD, including the conditional QR factorization and the associated reconstruction of the left singular vectors}, is significantly shorter than the runtime of \inline{DGESVJ}. 

To assess the effect of this computational bottleneck, we simulated the scenario in which the matrix--matrix product is optimized for quadruple precision, so that computing $\Atcomp$ at quadruple precision takes only 100 times as long as at double precision.
The timings in Figure~\ref{subfig.potential-version-Jacobi} suggest that our algorithm could be faster than the \inline{DGESVJ} and could be comparable in runtime with \inline{DGEJSV}.

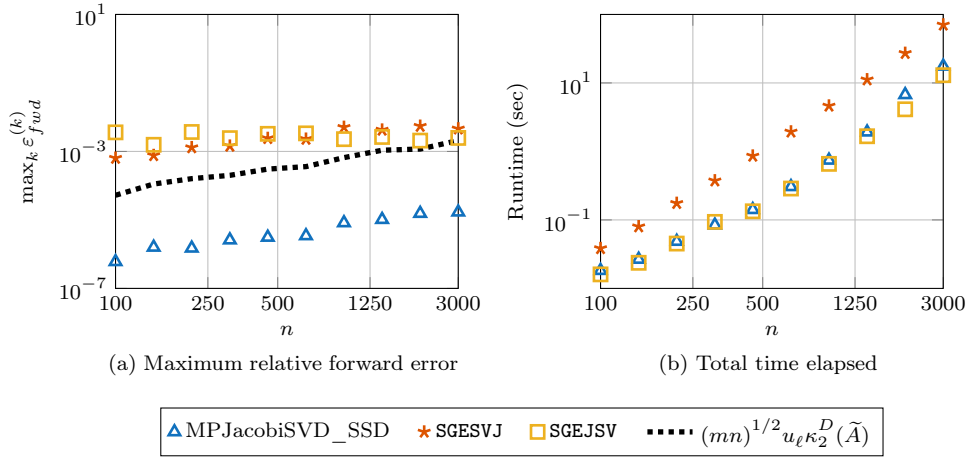
\begin{figure}[!tbhp]
\centering
\footnotesize
\subfloat[Maximum relative forward error\label{subfig.ssd-timing.mferrk}]{
  \begin{tikzpicture}[trim axis right, trim axis left]
    \begin{axis}[%
xmode=log,
xmin=100,
      xmax=3000,
      xtick={100, 250, 500, 1250, 3000},
      xticklabels={$100$, $250$, $500$, $1250$, $3000$},
xminorticks=true,
xlabel={$n$},
ymode=log,
ymin=1e-07,
ymax=10,
yminorticks=true,
ylabel={$\max_{k}\ferrk$},
]
\addplot [color=mycolor1, line width=1.0pt, mark size=2.5pt,  only marks, mark=triangle, mark options={solid, mycolor1}]
  table[row sep=crcr]{%
100	6.01887506945786e-07\\
146	1.58716852638463e-06\\
213	1.4930185443518e-06\\
311	2.58960972132627e-06\\
453	3.1005490654934e-06\\
662	3.43322449225525e-06\\
965	8.21642242954113e-06\\
1409	1.02440262708114e-05\\
2056	1.54203044075985e-05\\
3000	1.69179002114106e-05\\
};

\addplot [color=mycolor2, line width=1.0pt, mark size=2.5pt,  only marks, mark=star, mark options={solid, mycolor2}]
  table[row sep=crcr]{%
100	0.000638420227915049\\
146	0.000766285869758576\\
213	0.00130008778069168\\
311	0.00143913412466645\\
453	0.00241489289328456\\
662	0.00229511712677777\\
965	0.00509154424071312\\
1409	0.00423558987677097\\
2056	0.00548373349010944\\
3000	0.00458662444725633\\
};

\addplot [color=mycolor3, line width=1.0pt, mark size=2.5pt,  only marks, mark=square, mark options={solid, mycolor3}]
  table[row sep=crcr]{%
100	0.00360590382479131\\
146	0.00156265287660062\\
213	0.00369802373461425\\
311	0.00244371616281569\\
453	0.00326625118032098\\
662	0.00336931715719402\\
965	0.0022814276162535\\
1409	0.00264499126933515\\
2056	0.00207266490906477\\
3000	0.00248144124634564\\
};

\addplot [color=black, dotted, line width=2.0pt]
  table[row sep=crcr]{%
100	5.25011746503878e-05\\
146	0.000111578396172263\\
213	0.000160346404300071\\
311	0.000200491151190363\\
453	0.000304989895084873\\
662	0.000360292411642149\\
965	0.000659441866446286\\
1409	0.00108402012847364\\
2056	0.0011743784416467\\
3000	0.0020857525523752\\
};
\end{axis}
  \end{tikzpicture}}
\qquad\qquad\qquad
\subfloat[Total time elapsed\label{subfig.ssd-timing.total-time}]{
  \begin{tikzpicture}[trim axis right, trim axis left]
    \begin{axis}[%
xmode=log,
xmin=100,
      xmax=3000,
      xtick={100, 250, 500, 1250, 3000},
      xticklabels={$100$, $250$, $500$, $1250$, $3000$},
xminorticks=true,
xlabel={$n$},
ymode=log,
ymin=0.01,
ymax=100,
yminorticks=true,
ylabel={Runtime (sec)},
]
\addplot [color=mycolor1, line width=1.0pt, mark size=2.5pt,  only marks, mark=triangle, mark options={solid, mycolor1}]
  table[row sep=crcr]{%
100	0.0182229791666667\\
146	0.026555875\\
213	0.0485236666666667\\
311	0.0839164583333333\\
453	0.141331854166667\\
662	0.305285125\\
965	0.743214416666667\\
1409	1.91544572916667\\
2056	6.72535314583333\\
3000	17.5362966458333\\
};
\addplot [color=mycolor2, line width=1.0pt, mark size=2.5pt,  only marks, mark=star, mark options={solid, mycolor2}]
  table[row sep=crcr]{%
100	0.0383612291666667\\
146	0.0798477708333333\\
213	0.175195166666667\\
311	0.373573645833333\\
453	0.8589139375\\
662	1.93267727083333\\
965	4.6205051875\\
1409	11.1614885208333\\
2056	27.1508228125\\
3000	70.4715142708333\\
};
\addplot [color=mycolor3, line width=1.0pt, mark size=2.5pt,  only marks, mark=square, mark options={solid, mycolor3}]
  table[row sep=crcr]{%
100	0.0160260625\\
146	0.023653625\\
213	0.0451469791666667\\
311	0.0936005833333333\\
453	0.1336679375\\
662	0.2871960625\\
965	0.658285270833333\\
1409	1.66644364583333\\
2056	4.1230380625\\
3000	12.9679499791667\\
};
\end{axis}
  \end{tikzpicture}}

\subfloat{
  \begin{tikzpicture}[trim axis right, trim axis left]
    \begin{axis}[
      hide axis,
      xmin=0,
      xmax=1,
      ymin=0,
      ymax=1,
      /tikz/every even column/.append style={column sep=0.3cm},
      cells={line width=0.9pt, mark size=2.6},
      legend style={at={(0.5,0.75)}, anchor=center, legend columns=5}]
      \addplot[color=mycolor1, line width=1.0pt, mark size=2.5pt,  only marks, mark=triangle, mark options={solid, mycolor1}](-1,0);
      \addlegendentry{MPJacobiSVD\_SSD}
      \addplot[color=mycolor2, line width=1.0pt, mark size=2.5pt,  only marks, mark=star, mark options={solid, mycolor2}](-1,0);
      \addlegendentry{\inline{SGESVJ}}
      \addplot[color=mycolor3, line width=1.0pt, mark size=2.5pt,  only marks, mark=square, mark options={solid, mycolor3}](-1,0);
      \addlegendentry{\inline{SGEJSV}}
      \addplot[color=black, dotted, line width=2.0pt](-1,0);
      \addlegendentry{{Reference line}}
    \end{axis}
  \end{tikzpicture}
}
\vspace{-2.3cm}
\caption{Relative forward accuracy and runtime tests for MPJacobiSVD\_SSD (MP3JacobiSVD with single, single and double precision as low, working and high precision, respectively), \inline{SGESVJ} and \inline{SGEJSV} applied to test matrices with a fixed number of rows $m=3000$, a varying number of columns $n$ from $100$ to $3000$, a fixed condition number of $10^6$, and geometrically distributed singular values.}
\label{fig.compare-timing-ssd}
\end{figure}


Finally, 
{in a separate experiment with $\k(A)=10^6$, we implemented MPJacobiSVD\_SSD}, a version of MP3JacobiSVD with $(\ul, u, \uh) = (\text{single}, \text{single}, \text{double})$, which only uses optimized precisions available on our hardware. {Here, the preconditioner is computed using Algorithm~\ref{alg.orth-method} with orthogonalization step omitted due to $\ul = u$. }
In Figure~\ref{fig.compare-timing-ssd} we observed relative runtimes comparable to the potential relative runtimes in Figure~\ref{subfig.potential-version-Jacobi}. That is, our algorithm is almost as fast as the state-of-the-art preconditioned one-sided Jacobi routine in LAPACK, and significantly faster than the plain one-sided Jacobi. Moreover,
Figure~\ref{subfig.ssd-timing.mferrk} shows that
our algorithm achieved superior accuracy
compared to \inline{SGESVJ} and \inline{SGEJSV} {for all tested matrices}.
This experiment shows the potential of our algorithm:
with further appropriate optimization,
we can obtain accurate singular values without sacrificing efficiency.

\section{Conclusion}\label{sec.conclusion}
Higham et al.~\cite{htwz25} introduced mixed-precision preconditioned Jacobi eigenvalue algorithms that can not only speed up the convergence of the Jacobi iterations, but improve the relative accuracy of the computed eigenvalues. In this paper, we introduced mixed-precision preconditioned one-sided Jacobi algorithms and showed that they achieve analogous improvements for the computation of singular values%
{, with $\scond(A)$ replaced by $\scond(\At)$}.
We introduced two approaches for computing a preconditioner matrix $\Vt$ (Algorithm \ref{alg.orth-method} and Algorithm \ref{alg.low-prec-bidiagonal}). Using a new quantity called the obliquity (see Definition \ref{def.obliq}) we proved conditions under which the high accuracy will be achieved. We believe that the obliquity may find future use in the analysis of SVD algorithms.

By Remark \ref{rem.QR}, performing a QR factorization after our preconditioning process (but not before, as in \cite{gms25}) maintains the high accuracy, and can reduce the dimension of the matrix on which the Jacobi iterations are performed.

Assumption~\ref{it.gammah<u} and
Proposition~\ref{prop:scondlessthan3} suggest that,
{in order to meet the sufficient conditions used in our analysis},
one should set $u_h = c_1 u \kappa_2(A)^{-1}$ and $u_\ell = c_2 \kappa_2(A)^{-1}$ for some tunable parameters $c_1$ and $c_2$.
Hence, if one can estimate the condition number of $A$ (by e.g.~\cite{hiti00, dixo83}) then one can make informed decisions for the precisions.

Figure \ref{subfig.potential-version-Jacobi} and Figure \ref{subfig.ssd-timing.total-time} suggest that, with an optimized implementation of high precision matrix--matrix multiplication, our algorithm can be comparable in speed with the state-of-the-art implementations while attaining much higher accuracy for ill-conditioned matrices, as shown in Figure \ref{fig.fwderr_diff_cond} and Figure \ref{subfig.ssd-timing.mferrk}.

{Algorithm~\ref{alg.mp3jacobi-qr} has
  a natural extension to complex matrices,
  obtained by using the complex one-sided Jacobi
  algorithm~\cite[sects.~3.1 and~4.2]{drma20a}.
  Our numerical experiments show that
  this complex version also achieves
  high relative accuracy for the singular values,
  similar to that observed in the real case.\footnote{%
    See \url{https://github.com/zhengbo0503/Code_twz26a/tree/main/complex_experiments}
    for full performance and accuracy benchmarks
    for the complex version of Algorithm~\ref{alg.mp3jacobi-qr}.
    For brevity, we do not present these numerical experiments
    in this paper.
  }
  We expect that the analysis
  can also be extended to complex arithmetic.
  Indeed, Mathias notes explicitly that
  the error analysis for the Jacobi algorithm
  generalizes to complex matrices~\cite[p.~980]{math95}
  because the basic operations of complex arithmetic
  satisfy rounding-error bounds of the same form
  as the standard model for real arithmetic~\cite[Eq.~2.4]{high02-ASNA2},
  with appropriately enlarged constants~\cite[sect.~3.6]{high02-ASNA2}.
  A complete complex-arithmetic analysis is beyond the scope of this paper.}

\section*{Acknowledgments}
We thank the anonymous referees for their careful reading of our manuscript.
OpenAI GPT-5.6 Sol, accessed through Codex, was used to generate the code
for the complex-arithmetic numerical experiments based on the authors'
original code. The first author reviewed and tested all
generated code. The authors assume responsibility for all content.

\bibliographystyle{siamplain}
\bibliography{bib} 
\end{document}